\newtheorem{thm}{Theorem}[section]
\newtheorem{lem}[thm]{Lemma}
\newtheorem{remark}[thm]{Remark}
\newtheorem{cor}[thm]{Corollary}
\theoremstyle{plain}
\theoremstyle{definition}
\newcommand{\R}{\mathbf{R}}
\newcommand{\Z}{{\mathbf{Z}}}
\newcommand{\C}{{\mathbf{C}}}
\newcommand{\cp}{\mathbf{CP}}
\newcommand{\rp}{\mathbf{RP}}
\newcommand{\hp}{\mathbf{HP}}
\newcommand{\sph}{\mathbf{S}}
\newcommand{\G}{\operatorname{G}}
\newcommand{\SU}{\operatorname{SU}}
\renewcommand{\H}{\mathbf{H}}
\newcommand{\syp}{\operatorname{Sp}}
\newcommand{\SO}{\operatorname{SO}}
\newcommand{\Spin}{\operatorname{Spin}}
\newcommand{\U}{\operatorname{U}}
\newcommand{\Aut}{\operatorname{Aut}}
\newcommand{\tr}{\operatorname{tr}}
\renewcommand{\lim}[1]{\mathop{\underset{#1} {\underset \longleftarrow
{\text{\rm lim}}}}}
\newcommand{\comp}{{\text{ $\scriptstyle \circ$ }}}
\newcommand{\Ad}{\operatorname{Ad}}
\newcommand{\T}{\operatorname{T}}
\newcommand{\diag}{\operatorname{diag}}
\newcommand{\lra}{\longrightarrow}
\newcommand{\mf}[1]{\mathfrak{#1}}
\def\x{\times}
\def\<{\langle}
\def\>{\rangle}
\def\met{\<\, ,\, \>}
\newcommand{\MMet}[2]{\< #1 , #2 \>}
\renewcommand{\geq}{\geqslant}
\renewcommand{\leq}{\leqslant}
\numberwithin{equation}{section}
\begin{document}

\newcommand{\spacing}[1]{\renewcommand{\baselinestretch}{#1}\large\normalsize}
\spacing{1.14}

\title[Riemannian submersions from simple, compact Lie groups]{Riemannian submersions from simple,\\ compact Lie groups}

\author[M.\ Kerin and K.\ Shankar]{Martin Kerin$^\dagger$ and Krishnan Shankar$^\ast$}

\thanks{$^\dagger$ Supported by SFB 478 at the University of M\"unster.}
\thanks{$^\ast$ Partially supported by an NSF grant and SFB 478 at the University of M\"unster.}

\address{Mathematisches Institut, Einsteinstr. 62, 48149 M\"unster, Germany}
\email{m.kerin@math.uni-muenster.de}

\address{Department of Mathematics\\ University of Oklahoma\\ Norman
   \\ OK 73019}
\email{shankar@math.ou.edu}

\subjclass{53C20, 53C30, 57S15, 57S25}
\keywords{Riemannian submersions, biquotients, Lie groups.}

\begin{abstract}
In this paper we construct infinitely many examples of a Riemannian submersion from a simple, compact Lie group $G$ with bi-invariant metric onto a smooth manifold that cannot be a quotient of $G$ by a group action. This partially addresses a question of K.\ Grove's about Riemannian submersions from Lie groups.
\end{abstract}

\maketitle
\vspace{-.3cm}
\begin{center}
\textit{Dedicated to J.-H.\ Eschenburg on the occasion of his sixtieth birthday}
\end{center}

\normalsize
\thispagestyle{empty}

\section*{Introduction}

Riemannian submersions (which we always assume to have connected fibers) are fundamentally important in several areas of Riemannian geometry.  For instance, it is a classical and important problem in Riemannian geometry to construct Riemannian manifolds with positive or non-negative sectional curvature.  While there are a few methods, the most abundant source of examples comes via submersions from compact Lie groups (see \cite{Zi} for a survey).  In addition, many of the known examples of Einstein manifolds are constructed via Riemannian submersions (see \cite{Be}). Moreover, in order to prove the Diameter Rigidity Theorem for positively curved manifolds (\cite{GG1}, \cite{Wi}), a classification of Riemannian submersions from spheres equipped with a round metric was required (\cite{GG2}, \cite{Wi}).  As it turns out, all that can arise are Hopf fibrations.  In the special case where the fibers are totally geodesic, this classification had been achieved in \cite{Es1} (see also \cite{Ra}).  It is natural, therefore, to ask for a classification of Riemannian submersions from \emph{special} Riemannian manifolds.  In \cite{Es2} the author classified Riemannian submersions with totally geodesic fibers from $\cp^n$ equipped with the usual Fubini-Study metric.  Riemannian submersions from (flat) Euclidean space $\R^{n+k}$ were classified in \cite{GW}, where it was shown that the base must be diffeomorphic to $\R^n$ and the quotient of $\R^{n+k}$ by an isometric $\R^k$ action. Given the many geometric situations in which Riemannian submersions from Lie groups arise, it is therefore natural that one should address the following problem (\cite[Problem 5.4]{gro}): Determine the structure of all Riemannian submersions from $G$, where $G$ is a compact Lie group with a bi-invariant metric.
\smallskip

Until recently the only known Riemannian submersions from compact Lie groups equipped with bi-invariant metrics arose as biquotient submersions, namely Riemannian submersions from $(G, \met_0)$ given by the quotient of $G$ by a two-sided, free, isometric action of some closed subgroup of $G \x G$.  This changed with the discovery of a single example of a Riemannian submersion, $\SO(16) \rightarrow \sph^8$ (see Section 1), such that the base is not a quotient of the total space $G$ by a free group action.  In this paper we construct infinitely many examples of Riemannian submersions, $G \rightarrow B$, where $G$ is a simple compact Lie group equipped with a bi-invariant metric, and such that $B$ is not a quotient $G/U$ by any subgroup $U \subseteq {\rm Diff}(G)$; see Table \ref{list}.

\begin{table}[!ht]
\begin{tabular}{|lcl|c|} \hline
\multicolumn{1}{|c}{$\boldsymbol{G}$} & \multicolumn{1}{c}{$\boldsymbol{\lra}$} & \multicolumn{1}{l|}{$\boldsymbol{B}$} & \multicolumn{1}{|c|}{$\boldsymbol{n}$}\\ \hline \hline
$\SO(16)$ & $\lra$ & $\sph^8$ & - \\ \hline
$\SO(2n)$ & $\lra$ & $\sph^{2n-2}$ & $n \geq 4$\\ \hline
$\SU(2n)$ & $\lra$ & $\sph^{4n-3}$ & $n \geq 3$\\ \hline
$\SU(2n)$ & $\lra$ & $\cp^{2n-2}$ & $n \geq 3$\\ \hline
$\SO(4n)$ & $\lra$ & $V_{3}(\R^{4n-1})$ & $n \geq 3$ \\ \hline
$\SO(4n)$ & $\lra$ & $\sph^1 \backslash V_3(\R^{4n-1})$ & $n \geq 3$ \\ \hline
\end{tabular}
\vspace{.1cm}
\caption{Riemannian submersions $G \lra B$, where $B$ is not a quotient of $G$.}
\label{list}
\end{table}

It is a pleasure to thank Christoph B\"ohm, Luigi Verdiani, Burkhard Wilking and Wolfgang Ziller for many helpful discussions. We would also like to thank AIM and the organizers of the workshop on non-negative curvature in September 2007 that was, at least in part, responsible for our interest in this question. We would also like to acknowledge our debt to the paper \cite{Ke} which was invaluable as a reference for all our computations. Finally, we would both like to thank the Mathematics Institute and SFB 478 at the University of M\"unster for their hospitality and support.

\section{A Riemannian submersion, $\SO(16) \rightarrow \sph^8$}
\label{hopf}

In 2007, at a week long workshop at AIM in Palo Alto, one of the working groups constructed this example. Since we were both at the workshop, we would like to acknowledge the contribution of the members of the workshop in stimulating interest in the problem, especially Corey Hoelscher, Marius Munteanu, Craig Sutton, Kris Tapp and Wolfgang Ziller.

Consider the Hopf fibration
$$
\sph^7 \rightarrow \sph^{15} \rightarrow \sph^8.
$$
It is well-known that the round metric on $\sph^{15}$ induces a Riemannian submersion onto $\sph^8$.  Moreover, the isometry group of $(\sph^{15}, g_{\rm round})$ is $\SO(16)$, which also acts transitively on $\sph^{15}$ with isotropy subgroup $\SO(15)$.  Since $\SO(16)/\SO(15)$ is isotropy irreducible, it follows that the bi-invariant metric $\met_0$ on $\SO(16)$ induces the round metric on $\sph^{15}$ via a Riemannian submersion.  We may therefore compose these two Riemannian submersions to yield a Riemannian submersion $(\SO(16), \met_0) \lra \sph^{8}$.

However, this submersion is not the result of a free action by some Lie group $U$.  In particular, this is not a biquotient submersion.  If there were such a $U$, then $\dim(U) = \dim(\SO(16)) - 8 = 112$. From the long exact sequence of homotopy groups associated to the fibration $U \lra \SO(16) \lra \sph^8$, we see that $\pi_3(U) = \pi_3(\SO(16)) = \Z$ and $\pi_1(U) = \pi_1(\SO(16)) = \Z_2$. Since $\pi_3$ for a Lie group is the number of simple factors from which we conclude that $U$ is a simple, compact Lie group of dimension 112. A quick look at the classification of simple Lie groups reveals that there is no such group.


\section{The basic construction}
\label{basic}

The above example was, to date, the only known Riemannian submersion from a compact Lie group with bi-invariant metric that is not the result of a group action.  It is natural to wonder if this example is special in some way.  For instance, perhaps the construction relies on the fact that the Hopf fibration, $\sph^{15} \rightarrow \sph^8$, is not a principal bundle.  It turns out that this is not the case.  The important observation one should make is that the Hopf fibration is, in fact, a homogeneous fibration coming from the triple $\Spin(7) \subseteq \Spin(8) \subseteq \Spin(9)$:
$$
\Spin(8)/\Spin(7)=\sph^7 \hookrightarrow \Spin(9)/\Spin(7)=\sph^{15} \rightarrow \Spin(9)/\Spin(8) = \sph^8.
$$
In particular, $\sph^{15}$ may be written as a homogeneous space in two different ways.

There is another subtlety of which one should be wary.  The round metric on $\sph^{15}$ is not isometric to the normal homogeneous metric on the quotient $\Spin(9)/\Spin(7)$.  Thus, in order to combine the submersions so that the composed map is a Riemannian submersion, one has to choose the homogeneous metric on $\Spin(9)/\Spin(7)$ carefully.  The isotropy representation on $\Spin(9)/\Spin(7)$ has two irreducible summands (of dimensions $7$ and $8$).  Hence, as we shall see in Section \ref{hommet}, there is a two parameter family of homogeneous metrics on $\Spin(9)/\Spin(7)$.  It is possible to choose these parameters so that $\Spin(9)/\Spin(7)$ is equipped with the round metric.  Now one of the irreducible summands is tangent to the base $\Spin(9)/\Spin(8)$ and also irreducible under the $\Spin(8)$ isotropy action.  Therefore the restriction of the metric on $\Spin(9)/\Spin(7)$ to this isotropy summand yields a Riemannian submersion onto $\Spin(9)/\Spin(8)$.

The construction of all examples in this paper relies on putting together the two key ideas indicated above, namely:

First we look for homogeneous spaces that can be represented as the quotient of two distinct (simple) groups. Given a homogeneous space that can be represented as $G/K_1 = K_2/H$, we then proceed to find intermediate subgroups $H \subseteq L \subseteq K_2$ which give rise to a homogeneous fibration,
$$
L/H \lra K_2/H \lra K_2/L
$$
We now have two submersions, $\pi_1: G \lra G/K_1$ and $\pi_2: K_2/H \lra K_2/L$ which we compose to obtain a submersion $\pi: G \lra K_2/L$. Then we show, in some cases, that there is no $U \subseteq {\rm Diff}(G)$ such that $G/U = K_2/L$.

The second idea is to find a homogeneous metric on $K_2/H$ that is isometric to the normal homogeneous metric on $G/K_1$ and which induces a well-defined homogeneous metric on $K_2/L$ so that the map $\pi_2: K_2/H \lra K_2/L$ is a Riemannian submersion.  It is not always possible to do this (see for instance Section \ref{evensphere2}).  Whenever we can find such a metric, the submersion $\pi = \pi_2 \circ \pi_1 :G \lra K_2/L$ is Riemannian.
\smallskip

Let us examine the first part of the construction suggested above.  Suppose $\mf{g}$ is a compact Lie algebra with sub-algebras $\mf{k}_1, \mf{k}_2$ such that $\mf{g} = \mf{k}_1 + \mf{k}_2$. Then this is equivalent to the following: let $G$ be the simply connected, compact, Lie group with Lie algebra $\mf{g}$ and let $K_1, K_2$ be the closed subgroups in $G$ corresponding to the sub-algebras $\mf{k}_1$ and $\mf{k}_2$ respectively. Then we have the homogeneous space identities, $G/K_1 = K_2/(K_1 \cap K_2)$ and $G/K_2 = K_1/(K_1 \cap K_2)$. In 1962, A.\ L.\ Oni\v s\v cik classified all $(\mf{g}, \mf{k}_1, \mf{k}_2)$, where $\mf{g}$ is a simple, compact Lie algebra \cite{On}; all his spaces are given in Table \ref{onishchik} in the Appendix.

Now suppose there is a subgroup $U\subseteq {\rm Diff}(G)$ such that the base $K_2/L$ may be realized as the quotient $G/U$. Then from the long exact homotopy sequence of the fibration, $U \rightarrow G \rightarrow K_2/L$, we may compute the homotopy groups of $U$. Moreover, since we know $G$ and $K_2/L$, we also know the dimension of $U$. From this we can determine the (local) decomposition of $U$ into simple and torus groups. Every compact, connected Lie group $U$ is finitely covered by a Lie group diffeomorphic to $\T^k \x \widetilde U$, where $\T^k$ is a torus and $\widetilde U$ is a product of compact, connected, simply connected, simple Lie groups.  Now, since $\pi_1(\T^k \x \widetilde U) = \Z^k$ injects into $\pi_1(U)$ under the homomorphism induced by the covering, it follows that if we can determine $\pi_1(U)$ then we will know the rank $k$ of the torus $\T^k$. In addition, if we can find $\pi_3(U)$ then we will have determined the number of simple factors in $\widetilde U$.  

If we assume that the simple factors of $\widetilde U$ have dimension large enough, then determining $\pi_5(U) = \pi_5(\widetilde U)$ will allow us to decide which classical Lie groups are possible for the simple factors.  This is achieved via 
the isomorphisms $\pi_5(\Spin(n)) \cong \pi_5(\mathbf{O}) \cong 0$ if $n \geq 7$; $\pi_5(\SU(n)) \cong \pi_5(\mathbf{U}) \cong \Z$ if $n \geq 3$; and $\pi_5(\syp(n)) \cong \pi_5(\mathbf{Sp}) \cong \Z_2$ if $n\geq 1$,
where $\textbf{O}, \textbf{Sp}, \textbf{U}$ denote the stable (infinite dimensional) limits of the corresponding Lie groups (see \cite[pgs.\ 466--467]{Bre} for more details).  The remaining possibilities for simple factors are low-dimensional classical Lie groups and the exceptional Lie groups.


By examining the various examples in Oni\v s\v cik's list, we see that in some of the cases such a $U$ is not possible for dimension reasons (since all possible finite covers $T^k \times \widetilde U$ of $U$ with $\pi_1(\widetilde U) = 0$ may be determined as above).   This yields candidate (topological) submersions which need to be examined metrically.  Some of these candidates are listed in Table \ref{table1}.  Evidently, the example in Section \ref{hopf} falls neatly into this scheme.

\begin{table}[!ht]
\small
\begin{tabular}{|l|l|l|l|c|} \hline
\multicolumn{1}{|c|}{$\boldsymbol{G}$} & \multicolumn{1}{|c|}{$\boldsymbol{G/K_1}$} & \multicolumn{1}{|c|}{$\boldsymbol{K_2/H}$} & \multicolumn{1}{|c|}{$\boldsymbol{L}$} & $\boldsymbol{K_2/L}$\\ \hline \hline
\multicolumn{5}{|c|}{$\boldsymbol{G/K_1}$ \textbf{symmetric}} \\ \hline \hline
$\SO(16)$ & $\SO(16)/\SO(15)$ & $\Spin(9)/\Spin(7)$ & $\Spin(8)$ & $\sph^8$\\ \hline
$\underset{(n\geq 4)}{\SO(2n)}$ & $\SO(2n)/\U(n)$ & $\SO(2n-1)/\U(n-1)$ & $\SO(2n-2)$ & $\sph^{2n-2}$ \\ \hline
$\underset{(n\geq 3)}{\SU(2n)}$ & $\SU(2n)/\syp(n)$ & $\SU(2n-1)/\syp(n-1)$ & $\SU(2n-2)$ & $\sph^{4n-3}$ \\ \hline
$\underset{(n\geq 3)}{\SU(2n)}$ & $\SU(2n)/\syp(n)$ & $\SU(2n-1)/\syp(n-1)$ & $\U(2n-2)$ & $\cp^{2n-2}$ \\ \hline
\multicolumn{5}{c}{} \\ \hline
\multicolumn{5}{|c|}{$\boldsymbol{G/K_1}$ \textbf{non-symmetric}} \\ \hline \hline
$\underset{(n\geq 4)}{\SO(2n)}$ & $\SO(2n)/\SU(n)$ & $\SO(2n-1)/\SU(n-1)$ & $\SO(2n-2)$ & $\sph^{2n-2}$ \\ \hline
$\underset{(n\geq 3)}{\SO(4n)}$ & $\SO(4n)/\syp(n)\syp(1)$ & $\SO(4n-1)/\syp(n-1)\syp(1)$ & $\SO(4n-2)$ & $\sph^{4n-2}$ \\ \hline
$\underset{(n\geq 3)}{\SO(4n)}$ & $\SO(4n)/\syp(n)\syp(1)$ & $\SO(4n-1)/\syp(n-1)\syp(1)$ & $\SO(4n-3)$ & $T^1\sph^{4n-2}$ \\ \hline
$\underset{(n\geq 3)}{\SO(4n)}$ & $\SO(4n)/\syp(n)\syp(1)$ & $\SO(4n-1)/\syp(n-1)\syp(1)$ & $\SO(4n-4)$ & $V_{3}(\R^{4n-1})$ \\ \hline
\end{tabular}
\vspace{.1cm}
\caption{Candidate submersions $G \lra K_2/L$ that are not group quotients.}
\label{table1}
\end{table}
\normalsize

Besides the candidates listed above we also have $\SO(4n)/\syp(n)\U(1) = \SO(4n-1)/\syp(n-1)\U(1)$ and $\SO(4n)/\syp(n) = \SO(4n-1)/\syp(n-1)$. Each of these yields the same base spaces as the last three examples in Table \ref{table1}. The bases $B_1 = T^1\sph^{4n-2}$ and $B_2 = V_{3}(\R^{4n-1})$ admit a free diagonal $\SO(2)$ action from the left which is isometric for any homogeneous metric on $B_1$, $B_2$ respectively.  Thus we have a (topological) submersion $\SO(4n) \lra \SO(2) \backslash B_i$, $i= 1,2$, which will be Riemannian if $\SO(4n) \lra B_i$, $i=1,2$ respectively, is Riemannian.  However, as we shall see, not all of the candidates in Table \ref{table1} yield Riemannian submersions from $G$ onto the base.  In order to complete the picture we also need to address the metric part of the construction.

\section{Homogeneous metrics on $G/H$}
\label{hommet}

Given a compact, semisimple Lie group $G$ and a closed subgroup $H\subseteq G$ one has a natural decomposition of the Lie algebra $\mf{g}$ into invariant subspaces under the adjoint action of $H$: $\mf{g} = \mf{h} \oplus \mf{m}$, where $\mf{h}$ is the Lie algebra of $H$ and $\mf{m}$ is an $\Ad(H)$-invariant subspace complementary to $\mf{h} \subseteq \mf{g}$.  The representation of $H$ on $\mf{m}$ is called the isotropy representation of $H$.  Homogeneous metrics on $G/H$ are in one-to-one correspondence with $\Ad(H)$-invariant inner products on $\mf{m}$.  Furthermore, if we let $\met_{\mf{m}}$ be an $\Ad(H)$-invariant inner product on $\mf{m}$ and $\met_{\mf{h}}$ an arbitrary inner product on $\mf{h}$, then we may define an inner product $\met$ on $\mf{g}$ by declaring $\mf{h} \perp \mf{m}$.  Via left translation we get a left-invariant metric (also denoted by $\met$) on $G$ and a homogeneous metric (also denoted by $\met_{\mf{m}}$) on $G/H$ for which the map $\pi: (G, \met) \lra (G/H, \met_{\mf{m}})$ is a Riemannian submersion.

So, in order to understand homogeneous metrics on $G/H$, we need to understand $\Ad(H)$-invariant inner products on $\mf{m}$.  Now suppose $\mf{m}$ splits as $\mf{m} = \mf{p}_1 \oplus \cdots \oplus \mf{p}_s$ into a sum of $\Ad(H)$ irreducible sub-modules; the following well-known lemma follows readily from Schur's Lemma.

\begin{lem}
\label{perpreps}
Let $\mf{g} = \mf{h} \oplus \mf{m}$ be as above, where $\mf{m} = \mf{p}_1 \oplus \cdots \oplus \mf{p}_s$ and $\mf{p}_k$ is $\Ad(H)$ irreducible for all $1 \leq k \leq s$, and let $\met_{\mf{m}}$ be an $\Ad(H)$-invariant inner product on $\mf{m}$. Then $\mf{p}_i \perp \mf{p}_j$ with respect to $\met_{\mf{m}}$ whenever $\mf{p}_i$ and $\mf{p}_j$ are inequivalent representations of $H$.
\end{lem}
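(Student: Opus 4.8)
The plan is to reduce the statement to Schur's Lemma by manufacturing, out of the invariant inner product $\met_{\mf{m}}$, an $\Ad(H)$-equivariant linear map between the two summands, and then noting that such a map must vanish when the summands are inequivalent.

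Concretely, fix $i \neq j$ with $\mf{p}_i$ and $\mf{p}_j$ inequivalent as $H$-modules. First I would exploit the fact that $\met_{\mf{m}}$ restricts to a positive-definite inner product on $\mf{p}_i$ in order to define a linear map $\phi \colon \mf{p}_j \to \mf{p}_i$: for $X \in \mf{p}_j$ let $\phi(X)$ be the unique vector of $\mf{p}_i$ with $\met_{\mf{m}}(X, Y) = \met_{\mf{m}}(\phi(X), Y)$ for all $Y \in \mf{p}_i$. Next I would check that $\phi$ is $\Ad(H)$-equivariant. For $h \in H$, $X \in \mf{p}_j$ and $Y \in \mf{p}_i$, the invariance of $\met_{\mf{m}}$ together with $\Ad(h)\mf{p}_i = \mf{p}_i$ yields
\[
\met_{\mf{m}}(\Ad(h)X, Y) = \met_{\mf{m}}(X, \Ad(h^{-1})Y) = \met_{\mf{m}}(\phi(X), \Ad(h^{-1})Y) = \met_{\mf{m}}(\Ad(h)\phi(X), Y),
\]
and, the representing vector being unique, this forces $\phi(\Ad(h)X) = \Ad(h)\phi(X)$.

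To finish, since $\mf{p}_i$ and $\mf{p}_j$ are irreducible and inequivalent, Schur's Lemma says the only $H$-equivariant map $\mf{p}_j \to \mf{p}_i$ is the zero map (a nonzero one would be an isomorphism). Hence $\phi \equiv 0$, so by the defining property of $\phi$ we get $\met_{\mf{m}}(X, Y) = 0$ for all $X \in \mf{p}_j$ and $Y \in \mf{p}_i$; that is, $\mf{p}_i \perp \mf{p}_j$ with respect to $\met_{\mf{m}}$, as claimed. I do not expect a genuine obstacle here, since the argument is purely formal; the only point worth a word of care is that $\mf{m}$ carries a \emph{real} representation, so the version of Schur's Lemma being invoked is the one over $\R$. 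Fortunately, the statement actually needed — vanishing of equivariant maps between non-isomorphic irreducibles — holds over any ground field, regardless of whether the endomorphism rings of $\mf{p}_i$ and $\mf{p}_j$ are $\R$, $\C$, or $\H$.
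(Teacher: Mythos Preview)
Your proof is correct and is precisely the standard argument the paper has in mind: the paper does not give details but simply states that the lemma ``follows readily from Schur's Lemma,'' and your construction of the equivariant map $\phi$ and its vanishing is exactly how one makes that sentence rigorous. Your remark about the real version of Schur's Lemma is also appropriate and correct.
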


As it turns out, $\Ad(H)$-invariant inner products on the irreducible summands $\mf{p}_k$ are very special. The following lemma is also well known.

\begin{lem}
\label{uniquemetrics}
Let $H$ be any group and let $V$ be an irreducible $H$-representation. Suppose there are two $H$-invariant inner products, $\met_1$ and $\met_2$, on $V$. Then there exists a constant $\lambda >0$ such that $\met_1 = \lambda\, \met_2$.
\end{lem}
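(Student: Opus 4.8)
The plan is to encode the relationship between the two inner products in a single linear operator and then run a Schur-type argument. Work over $\R$ with $V$ finite-dimensional (which is the only case we need, since in our applications $V = \mf{p}_k$). Using $\met_2$ as a reference, there is a unique $\R$-linear map $A\colon V \to V$ determined by $\< v, w \>_1 = \< Av, w \>_2$ for all $v,w \in V$. Since both inner products are symmetric, $A$ is self-adjoint with respect to $\met_2$; and since $\met_1$ is positive definite, $\< Av, v \>_2 = \< v, v \>_1 > 0$ for every $v \neq 0$, so $A$ is positive definite as well.

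The next step is to verify that $A$ is an intertwiner, i.e.\ that $A(h\cdot v) = h\cdot(Av)$ for every $h \in H$. This is a short computation using the $H$-invariance of both inner products: for any $w \in V$,
$$
\< A(h\cdot v), w \>_2 = \< h\cdot v, w \>_1 = \< v, h^{-1}\cdot w \>_1 = \< Av, h^{-1}\cdot w \>_2 = \< h\cdot(Av), w \>_2,
$$
and since $\met_2$ is nondegenerate this forces $A(h\cdot v) = h\cdot(Av)$.

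The final step is the Schur-type argument. Because $A$ is self-adjoint with respect to the real inner product $\met_2$, the spectral theorem guarantees that $A$ has a real eigenvalue $\lambda$, and $\lambda > 0$ by positive definiteness. The eigenspace $V_\lambda = \ker(A - \lambda\,\Id)$ is nonzero, and since $A$ commutes with the $H$-action so does $A - \lambda\,\Id$; hence $V_\lambda$ is an $H$-invariant subspace of $V$. By irreducibility $V_\lambda = V$, so $A = \lambda\,\Id$, and therefore $\< v, w \>_1 = \lambda\,\< v, w \>_2$ for all $v, w$, which is the claim.

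There is essentially no serious obstacle here; the one point deserving care is that we are over $\R$, where Schur's Lemma does \emph{not} force an $H$-equivariant endomorphism to be scalar (the commutant may be $\C$ or $\H$). What rescues the argument is that $A$ is genuinely self-adjoint — precisely because it is built from two honest inner products — so it is diagonalizable with an $H$-invariant eigenspace, and that forces it to be a positive scalar regardless of the structure of the commutant.
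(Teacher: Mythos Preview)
Your proof is correct. The paper itself does not supply a proof of this lemma; it simply labels it ``well known'' and moves on, so there is no argument to compare against. Your approach --- encoding $\met_1$ in terms of $\met_2$ via a self-adjoint positive operator $A$, checking $H$-equivariance, and using the spectral theorem over $\R$ to produce an $H$-invariant eigenspace --- is the standard one, and you correctly flag the only real subtlety: Schur's Lemma alone over $\R$ does not force $A$ to be scalar, but self-adjointness does.
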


In the special case where the irreducible summands $\mf{p}_1, \dots, \mf{p}_s$ of $\mf{m}$ are pairwise inequivalent, Lemmas \ref{perpreps} and \ref{uniquemetrics} tell us that \emph{all} homogeneous metrics on $G/H$ are described by $s$ positive real numbers, namely
\begin{equation}
\label{inequivmet}
\met_{\mf{m}} = \lambda_1 Q|_{\mf{p}_1} \perp \lambda_2 Q|_{\mf{p}_2} \perp \dots \perp \lambda_s Q|_{\mf{p}_s},
\end{equation}
where $Q$ is some bi-invariant metric on $G$ and $\lambda_1, \dots, \lambda_s >0$.  We will always choose $Q$ to be the negative of the Killing form on $\mf{g}$.

Suppose now that some of our irreducible $\Ad(H)$ sub-modules are pairwise equivalent.  In this situation it is more complicated to write down all possible homogeneous metrics on $G/H$ because equivalent sub-modules need not be perpendicular.  However, there is a well-established procedure.  Any $\Ad(H)$-invariant inner product $\met_{\mf{m}}$ on $\mf{m}$ satisfies $\MMet{X}{Y}_{\mf{m}} = Q(\Phi(X), Y)$, where $\Phi: \mf{m} \lra \mf{m}$ is a linear, positive definite, symmetric, $\Ad(H)$-equivariant map.  Therefore, the space of all possible $\Ad(H)$-invariant inner products on $\mf{m}$ may be described by parametrizing the space of all possible maps $\Phi$.  This is done as follows.

We first consider the complexification $\psi \otimes \C$ of a real representation $\psi : G \lra \Aut(V)$.  If $\psi \otimes \C$ is irreducible, we say $\psi$ is orthogonal.  Otherwise $\psi \otimes \C = \varphi \oplus \bar\varphi$.  If $\varphi$ is not equivalent to $\bar\varphi$, we say $\psi$ is unitary.  If, on the other hand, $\varphi$ and $\bar\varphi$ are equivalent, we say $\psi$ is symplectic.  We call a map $A: V \lra V$ such that $\psi \comp A = A \comp \psi$ an \emph{intertwining operator}.  The space of all intertwining operators is has dimension one if $\psi$ is orthogonal, two if $\psi$ is unitary, and four if $\psi$ is symplectic.

It follows that between each pair of equivalent irreducible representations $\mf{p}_i, \mf{p}_j$ we have either a one, two, or four parameter family of $\Ad(H)$-invariant inner products.  That is, $\MMet{\mf{p}_i}{\mf{p}_j}_{\mf{m}} = Q(\Phi(\mf{p}_i), \mf{p}_j)$ is given by one, two or four real parameters.  Therefore $\Phi$ may be represented by an $s \times s$ symmetric matrix whose $ij$-th entry is real when $i=j$, zero if $\mf{p}_i$ and $\mf{p}_j$ are inequivalent, and an element of $\R$, $\C$ or $\H$ when $\mf{p}_i$ and $\mf{p}_j$ are equivalent.

\medskip
Let us return now to the second part of the construction suggested in Section \ref{basic}.  Consider the situation where we have $G/K_1 = K_2/H$ as homogeneous spaces and a chain of subgroups $H \subseteq L \subseteq K_2$ which gives the homogeneous fibration, $L/H \lra K_2/H \stackrel{\pi_2}{\lra} K_2/L$. We fix a bi-invariant metric $\met_0$ on $G$ and hence a normal homogeneous metric on $G/K_1$.  It is clear that $K_2$ acts isometrically and transitively on $G/K_1$ with isotropy group $H$.  Therefore there is some homogeneous metric on $K_2/H$ isometric to the normal homogeneous metric on $G/K_1$.  We want to choose this metric on $K_2/H$ and then determine whether the map $\pi_2 : K_2/H \lra K_2/L$ is a Riemannian submersion.

Consider the Lie algebras $\mf{h} \subseteq \mf{l} \subseteq \mf{k}_2$ corresponding to the Lie groups $H \subseteq L \subseteq K_2$.  If we choose an $\Ad(H)$-invariant complement $\mf{m}_1$ of $\mf{h} \subseteq \mf{l}$ and an $\Ad(L)$-invariant complement $\mf{m}_2$ of $\mf{l} \subseteq \mf{k}_2$, then we arrive at a decomposition
$$
\mf{k}_2 = \mf{l} \oplus \mf{m}_2 = (\mf{h} \oplus \mf{m}_1) \oplus \mf{m}_2.
$$
In particular, $\mf{m}_1 \oplus \mf{m}_2$ is an $\Ad(H)$-invariant complement of $\mf{h} \subseteq \mf{k}_2$  since the $H$ action on $\mf{m}_2$ is simply a restriction of the $L$ action.  We remark that $\mf{m}_1$ and $\mf{m}_2$ correspond to the tangent spaces of the fiber and base of the fibration $L/H \lra K_2/H \stackrel{\pi_2}{\lra} K_2/L$ respectively.

Let $\mf{m}_2 = \mf{q}_1 \oplus \cdots \oplus \mf{q}_s$ be the irreducible decomposition of $\mf{m}_2$ with respect to $\Ad(L)$.  From our discussion above we can therefore determine all homogeneous metrics on $K_2/L$.  Recall that we require $\mf{m}_2 \perp \mf{l}$.  In particular, we see that a necessary condition for $\pi_2$ to be a Riemannian submersion is $\mf{m}_1 \perp \mf{m}_2$ with respect to the homogeneous metric on $K_2/H$.

Consider now homogeneous metrics on $K_2/H$.  Let $\mf{m}_1 = \mf{p}_1 \oplus \cdots \oplus \mf{p}_r$ be the irreducible decomposition of $\mf{m}_1$ with respect to $\Ad(H)$.  In general, each of the $\Ad(L)$ irreducible summands $\mf{q}_j \subseteq \mf{m}_2$, $1 \leq j \leq s$, will split further into $\Ad(H)$ irreducible summands.  This is usually a problem when we want $\pi_2$ to be a Riemannian submersion (given by restriction of the inner product on $\mf{m}_1 \oplus \mf{m}_2$ to $\mf{m}_2$).  Together with the discussion in the previous paragraph, this leads us to consider a special case.  Suppose that the following conditions hold:

\smallskip
\noindent (\textit{i}) $\mf{q}_1 \oplus \cdots \oplus \mf{q}_s$ is the irreducible decomposition of $\mf{m}_2$ with respect to both $\Ad(H)$ and $\Ad(L)$;

\medskip
\noindent (\textit{ii}) For all $1 \leq i \leq r$, $1 \leq j \leq s$, the $\Ad(H)$ irreducible representations $\mf{p}_i$ and $\mf{q}_j$ are pairwise inequivalent;

\medskip
\noindent (\textit{iii}) If $\mf{q}_i$ and $\mf{q}_j$,  $i,j \in \{1, \dots, s \}$, are two equivalent irreducible representations, then they are of the same type with respect to both $\Ad(H)$ and $\Ad(L)$, i.e. $\mf{q}_i$ and $\mf{q}_j$ are either both orthogonal, both unitary or both symplectic as both $H$ and $L$ representations.

\smallskip

Conditions (\textit{i}) and (\textit{ii}) ensure, by Lemma \ref{perpreps}, that $\mf{m}_1 \perp \mf{m}_2$ for every homogeneous metric on $K_2/H$.  Conditions (\textit{i}) and (\textit{iii}) (together with Lemmas \ref{perpreps} and \ref{uniquemetrics}) ensure that the restriction of a homogeneous metric on $K_2/H$ to $\mf{m}_2$ yields a homogeneous metric on $K_2/L$.  Therefore $\pi_2$ gives a Riemannian submersion for \emph{any} choice of homogeneous metric on $K_2/H$.  In particular, when $K_2/H$ is isometric to the normal homogeneous space $G/K_1$, we obtain a Riemannian submersion $\pi: (G, \met_0) \lra K_2/L$ as desired.  We have proved:

\begin{thm}
Suppose we have $G/K_1 = K_2/H$, where $G$ is a compact, semi-simple Lie group with bi-invariant metric $\met_0$ and $K_1, K_2, H$ are closed subgroups of $G$. If, for some closed subgroup $H\subseteq L \subseteq K_2$, conditions (\textit{i}),  (\textit{ii}) and (\textit{iii}) above hold, then there is a Riemannian submersion from $(G, \met_0)$ onto $K_2/L$.
\end{thm}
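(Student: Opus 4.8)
The plan is to carry out precisely the construction laid out in the paragraphs above, verifying at each stage that the metrics involved are well-defined and that the relevant projections are Riemannian submersions.

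First I would fix the bi-invariant metric $\met_0$ on $G$; this induces a normal homogeneous metric on $G/K_1$ for which $\pi_1 : (G, \met_0) \lra G/K_1$ is a Riemannian submersion. Since $K_2$ acts transitively and isometrically on $G/K_1 = K_2/H$ with isotropy group $H$, this normal metric corresponds, under the identification $G/K_1 = K_2/H$, to a homogeneous metric on $K_2/H$, i.e.\ to an $\Ad(H)$-invariant inner product $\met_{\mf{m}}$ on $\mf{m} = \mf{m}_1 \oplus \mf{m}_2$, where $\mf{m}_1$ and $\mf{m}_2$ are tangent to the fiber $L/H$ and the base $K_2/L$ of the homogeneous fibration $L/H \lra K_2/H \stackrel{\pi_2}{\lra} K_2/L$. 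Writing $\mf{m}_1 = \mf{p}_1 \oplus \cdots \oplus \mf{p}_r$ and $\mf{m}_2 = \mf{q}_1 \oplus \cdots \oplus \mf{q}_s$ for the $\Ad(H)$-irreducible decompositions (the latter also being the $\Ad(L)$-decomposition by condition (\textit{i})), condition (\textit{ii}) and Lemma \ref{perpreps} give $\mf{m}_1 \perp \mf{m}_2$ with respect to $\met_{\mf{m}}$; hence the horizontal space of $\pi_2$ at the base point is exactly $\mf{m}_2$.

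The central step is to check that the restriction $\met_{\mf{m}}|_{\mf{m}_2}$ actually defines a homogeneous metric on $K_2/L$, i.e.\ is $\Ad(L)$-invariant and not merely $\Ad(H)$-invariant. Since $H \subseteq L$, the space of $\Ad(L)$-invariant inner products on $\mf{m}_2$ sits inside the space of $\Ad(H)$-invariant ones, so it suffices to see that these two spaces have the same dimension. By Lemma \ref{perpreps} the blocks between inequivalent summands $\mf{q}_i, \mf{q}_j$ vanish for both, while the diagonal blocks and the blocks between equivalent summands are parametrized by intertwining operators, whose number ($1$, $2$, or $4$) depends only on the type (orthogonal, unitary, symplectic) of the representation; condition (\textit{iii}) forces these types to agree as $H$- and as $L$-representations, so the two parameter counts coincide, and Lemma \ref{uniquemetrics} handles the irreducible pieces. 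Therefore $\met_{\mf{m}}|_{\mf{m}_2}$ is $\Ad(L)$-invariant, and by $K_2$-equivariance the map $\pi_2 : (K_2/H, \met_{\mf{m}}) \lra (K_2/L, \met_{\mf{m}}|_{\mf{m}_2})$ is a Riemannian submersion. Composing with $\pi_1 : (G, \met_0) \lra (G/K_1, \text{normal}) = (K_2/H, \met_{\mf{m}})$, which is a Riemannian submersion by construction, yields $\pi = \pi_2 \comp \pi_1 : (G, \met_0) \lra K_2/L$, a Riemannian submersion (with connected fibers, those of $\pi_1$ and $\pi_2$ being connected). The only genuine obstacle is this middle step — ensuring that restriction of an $\Ad(H)$-invariant inner product to $\mf{m}_2$ is automatically $\Ad(L)$-invariant — and that is exactly what the dimension count afforded by conditions (\textit{i}) and (\textit{iii}) delivers; the rest is formal, using the discussion of Section \ref{hommet} and the fact that a composition of Riemannian submersions is again one.
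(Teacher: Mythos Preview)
Your proposal is correct and follows essentially the same approach as the paper: the theorem is stated in the paper immediately after the argument you reproduce, with the words ``We have proved,'' so the paper's proof \emph{is} the discussion invoking conditions (\textit{i})--(\textit{iii}) together with Lemmas~\ref{perpreps} and~\ref{uniquemetrics}. Your dimension-count justification for why the restricted inner product on $\mf{m}_2$ is $\Ad(L)$-invariant simply makes explicit what the paper leaves as a one-line appeal to those conditions and lemmas.
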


We are now ready to discuss the candidates from Table \ref{table1}.

\section{$\SO(2n)  \lra \sph^{2n-2}$, $n \geq 4$}
\label{evensphere1}

\begin{thm}
\label{evensphmet}
For each $n \geq 2$, there is a Riemannian submersion
$$
(\SO(2n), \met_0) \lra \sph^{2n-2}.
$$
\end{thm}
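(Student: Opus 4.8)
The plan is to exhibit $\sph^{2n-2}$ as the base $K_2/L$ of an instance of the construction of Section~\ref{basic} and then to check conditions (\textit{i}), (\textit{ii}), (\textit{iii}), so that the theorem at the end of Section~\ref{hommet} applies verbatim. Concretely, I would take $G = \SO(2n)$ with bi-invariant metric $\met_0$, $K_1 = \U(n)$ (the stabilizer of an orthogonal complex structure on $\R^{2n}$), and $K_2 = \SO(2n-1)$ (the stabilizer of a unit vector). The Oni\v{s}\v{c}ik decomposition $\mf{so}(2n) = \mf{u}(n) + \mf{so}(2n-1)$ then gives $G/K_1 = \SO(2n)/\U(n) = \SO(2n-1)/\U(n-1) = K_2/H$, with $H = K_1 \cap K_2 = \U(n-1)$; for the intermediate subgroup I would take $L = \SO(2n-2)$, the stabilizer in $\SO(2n-1)$ of a unit vector, producing the homogeneous fibration
$$
\SO(2n-2)/\U(n-1) \lra \SO(2n-1)/\U(n-1) \stackrel{\pi_2}{\lra} \SO(2n-1)/\SO(2n-2) = \sph^{2n-2}.
$$

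The next step is to identify the isotropy modules. Writing $\mf{m}_1$ for an $\Ad(H)$-invariant complement of $\mf{h}$ in $\mf{l}$ and $\mf{m}_2$ for an $\Ad(L)$-invariant complement of $\mf{l}$ in $\mf{k}_2$, the module $\mf{m}_2$ is the isotropy representation of $\sph^{2n-2} = \SO(2n-1)/\SO(2n-2)$, namely the standard representation $\R^{2n-2}$ of $\SO(2n-2)$; and $\mf{m}_1$ is the isotropy representation of the Hermitian symmetric space $\SO(2n-2)/\U(n-1)$, namely the underlying real representation $[\Lambda^2_\C\,\C^{n-1}]_\R$ of the second exterior power of the standard $\U(n-1)$-module. (For $n = 2$ one has $\mf{m}_1 = 0$ and $\pi_2$ is an isometry, so the verification below is trivial.)

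Then I would check the three conditions. Condition (\textit{i}): $\mf{m}_2 = \R^{2n-2}$ is $\Ad(L) = \SO(2n-2)$-irreducible, and restricting the structure group to $\Ad(H) = \U(n-1)$ it becomes $[\C^{n-1}]_\R$, which remains irreducible over $\R$ because $\C^{n-1}$ is not self-conjugate as a $\U(n-1)$-module (the center $\U(1) \subseteq \U(n-1)$ acts on $\C^{n-1}$ by $z \mapsto z$ and on $\overline{\C^{n-1}}$ by $z \mapsto z^{-1}$); hence $s = 1$. The analogous computation, with central weights $\pm 2$ on the complexification of $\mf{m}_1$, shows $\mf{m}_1$ is $\Ad(H)$-irreducible for $n \geq 3$, so $r = 1$. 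Condition (\textit{ii}): $\mf{m}_1$ and $\mf{m}_2$ are inequivalent as $\U(n-1)$-modules, since the center acts on the complexification of one with weights $\pm 2$ and on that of the other with weights $\pm 1$. Condition (\textit{iii}): there is a single $\Ad(L)$-irreducible summand in $\mf{m}_2$, so the condition is vacuous (equivalently, the $\Ad(H)$-invariant inner product on $\mf{m}_2$ is unique up to scale and is a multiple of $Q|_{\mf{m}_2}$, hence automatically $\Ad(L)$-invariant). By the theorem at the end of Section~\ref{hommet} this yields a Riemannian submersion $(\SO(2n), \met_0) \lra \SO(2n-1)/\SO(2n-2) = \sph^{2n-2}$, as desired.

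I expect the only delicate point to be condition (\textit{i}): one must be sure that the standard module $\R^{2n-2}$, irreducible for $\SO(2n-2)$, does not split further when the structure group is reduced to $\U(n-1)$. This is exactly the non-self-conjugacy of $\C^{n-1}$ noted above, and it is the same observation that separates $\mf{m}_1$ from $\mf{m}_2$ under $\Ad(H)$; everything else is bookkeeping with Oni\v{s}\v{c}ik's identification and the formalism of Section~\ref{hommet}. As a byproduct, since $\mf{m}_2$ is $\SO(2n-2)$-irreducible, the metric induced on the base is (a multiple of) the round metric on $\sph^{2n-2}$, though this is not needed for the statement.
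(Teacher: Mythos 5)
Your proposal is correct and follows essentially the same route as the paper: the same triple $\U(n-1)\subseteq\SO(2n-2)\subseteq\SO(2n-1)$, the same identification $\SO(2n)/\U(n)=\SO(2n-1)/\U(n-1)$, and the same verification that the fiber and base isotropy modules are inequivalent and that the base module stays irreducible under $\U(n-1)$. The one genuine (and welcome) refinement is your central-weight argument distinguishing $[\Lambda^2_\C\,\C^{n-1}]_\R$ from $[\C^{n-1}]_\R$, which handles the case $n=4$ --- where the two modules have equal dimension $6$ and the paper instead defers to \cite{Ke} --- uniformly with all other $n$.
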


\begin{proof}
Consider the Riemannian submersion $\pi_1 : \SO(2n) \lra \SO(2n)/\U(n)$, where we have equipped $\SO(2n)$ with a bi-invariant metric $\met_0$.  From Oni\v s\v cik's classification we know that $\SO(2n)/\U(n) = \SO(2n-1)/\U(n-1)$.

Now $\U(n-1) \subseteq \SO(2n-2) \subseteq \SO(2n-1)$ and so we have a fibration
$$
\SO(2n-2)/\U(n-1) \lra \SO(2n-1)/\U(n-1) \stackrel{\pi_2}{\lra} \SO(2n-1)/\SO(2n-2) = \sph^{2n-2}
$$
The tangent space to the base may be identified with $\mf{p}_2$, a $2(n-1)$-dimensional, $\Ad(\SO(2n-2))$-irreducible complement of $\mf{so}(2n-2) \subseteq \mf{so}(2n-1)$.  The restriction of the $\Ad(\SO(2n-2))$ action to $\U(n-1) \subseteq \SO(2n-2)$ is the standard irreducible representation of $\U(n-1)$ on $\mf{p}_2 \cong \C^{n-1}$.

On the other hand, the tangent space to the fiber may be indentified with $\mf{p}_1$, an $\Ad(\U(n-1))$-invariant complement of $\mf{u}(n-1) \subseteq \mf{so}(2n-2)$.  $\mf{p}_1$ is $(n-1)(n-2)$-dimensional and is $\Ad(\U(n-1))$-irreducible (see for instance \cite{Ke}).

Thus we may write
$$
\begin{aligned}
\mf{so}(2n-1) &= \mf{so}(2n-2) \oplus \mf{p}_2 \\
              &= (\mf{u}(n-1) \oplus \mf{p}_1) \oplus \mf{p}_2
\end{aligned}
$$
where $\mf{p}_1$ and $\mf{p}_2$ are orthogonal by the inequivalence of the $\U(n-1)$ representations.  For $n \neq 4$ this is clear for dimension reasons, while the case $n = 4$ follows from the discussion in \cite{Ke}.

Hence all homogeneous metrics on $\SO(2n-1)/\U(n-1)$ are given by
$$
\met = \lambda_1 Q|_{\mf{p}_1} \perp \lambda_2 Q|_{\mf{p}_2},
$$
where $Q(X,Y) = -\frac{1}{2}\tr(XY)$ (in particular, $\Ad(\U(n-1))$-invariant) and $\lambda_1, \lambda_2 >0$.  We choose $\lambda_1$ and $\lambda_2$ such that $\SO(2n-1)/\U(n-1)$ is isometric to $\SO(2n)/\U(n)$ equipped with the normal homogeneous metric (from \cite{Ke} it follows that the appropriate choice is $\lambda_2 = \frac{1}{2}\lambda_1$).  Furthermore, since $\mf{p}_2$ is $\Ad(\SO(2n-2))$-irreducible, perpendicular to $\mf{so}(2n-2)$ and equipped with an $\Ad(\SO(2n-2))$-invariant metric, the map
$$
\pi_2: \SO(2n-1)/\U(n-1) \lra \SO(2n-1)/\SO(2n-2) = \sph^{2n-2}
$$
is a Riemannian submersion.

The composition $\pi = \pi_2 \comp \pi_1$ is the desired Riemannian submersion from $\SO(2n)$ (equipped with a bi-invariant metric) to $\sph^{2n-2}$.
\end{proof}

Note that when $n = 2$ we have $\Delta \SO(2) \backslash \SO(4) / \SO(3) = \sph^2$ and when $n = 3$ we have $\SO(3) \backslash \SO(6) / \SU(3) = \Delta \SU(2) \backslash \SU(4) / \SU(3) = \hp^1 = \sph^4$, where $\Delta$ denotes the diagonal embedding in both cases.  On the other hand, for $n \geq 4$:

\begin{thm}
\label{evensphtop}
For each $n\geq 4$, there is no Lie group $U$ acting freely on $\SO(2n)$ such that $\SO(2n)/ U = \sph^{2n-2}$.
\end{thm}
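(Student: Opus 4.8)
The plan is to push the topological argument of Section \ref{hopf} one homotopy group further. Suppose, for a contradiction, that a group $U$ acts freely on $\SO(2n)$ with $\SO(2n)/U=\sph^{2n-2}$. As in Section \ref{hopf}, such a $U$ is a compact Lie group and $\SO(2n)\to\sph^{2n-2}$ is a principal $U$-bundle; since $\sph^{2n-2}$ is simply connected (here $2n-2\geq6$), the covering $\SO(2n)/U_0\to\SO(2n)/U$ is trivial, so $U=U_0$ is connected. We thus have a fibration $U\hookrightarrow\SO(2n)\to\sph^{2n-2}$ and its long exact homotopy sequence to exploit.

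First I would collect the numerical data. Counting dimensions, $\dim U=\dim\SO(2n)-(2n-2)=n(2n-1)-(2n-2)=2n^2-3n+2$. Since $n\geq4$ we have $2n-2\geq6$, so $\pi_k(\sph^{2n-2})=0$ for $1\leq k\leq5$, and the long exact sequence gives $\pi_1(U)\cong\pi_1(\SO(2n))=\Z_2$ and $\pi_3(U)\cong\pi_3(\SO(2n))=\Z$. By the discussion in Section \ref{basic}, the first isomorphism rules out a torus factor in $U$ and the second says $U$ has exactly one simple factor; hence the universal cover $\widetilde U$ of $U$ is a compact, simply connected, simple Lie group and $U=\widetilde U/\Gamma$ with $\Gamma\cong\pi_1(U)\cong\Z_2$. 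In particular $\widetilde U$ has a central element of order two, and the Cartan--Killing classification then limits $\widetilde U$ to $\Spin(m)$ ($m\geq5$), $\SU(2m)$ ($m\geq1$), $\syp(m)$ ($m\geq1$), or $E_7$ -- in every case a group of dimension $2n^2-3n+2$.

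Next I would eliminate these candidates. The group $E_7$ has dimension $133$, and $2n^2-3n+2=133$ has no integer solution, so $\widetilde U\neq E_7$. For the classical candidates I would invoke $\pi_5$. When $n\geq5$ we have $2n-2\geq8$, so $\pi_5(\sph^{2n-2})=\pi_6(\sph^{2n-2})=0$; combined with $\pi_5(\SO(2n))\cong\pi_5(\mathbf{O})=0$ (valid since $2n\geq7$), the long exact sequence forces $\pi_5(U)=\pi_5(\widetilde U)=0$. But $\pi_5(\SU(2m))\neq0$ for all $m\geq1$ (it is $\Z_2$ for $m=1$, using $\SU(2)\cong\sph^3$, and $\Z$ for $m\geq2$ by stability), and $\pi_5(\syp(m))\cong\Z_2\neq0$ for all $m\geq1$; so $\widetilde U$ is neither $\SU(2m)$ nor $\syp(m)$, and hence $\widetilde U=\Spin(m)$ for some $m$. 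Then $\dim U=\binom{m}{2}$, but
\[
\binom{2n-1}{2}=2n^2-3n+1<2n^2-3n+2<2n^2-n=\binom{2n}{2},
\]
and $m\mapsto\binom{m}{2}$ is strictly increasing, so $2n^2-3n+2$ is not of the form $\binom{m}{2}$ -- a contradiction. This disposes of all $n\geq5$.

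It remains to treat $n=4$, which needs a different (and cheaper) endgame: there $\sph^{2n-2}=\sph^6$ has $\pi_6(\sph^6)=\Z$, so the argument above no longer forces $\pi_5(U)$ to vanish. Here I would just use the structure already obtained: $U=\widetilde U/\Z_2$ with $\widetilde U$ compact, simply connected, simple of dimension $\dim U=2\cdot16-12+2=22$. A glance at the dimensions of the compact simple Lie groups (type $A$: $3,8,15,24,\dots$; types $B$ and $C$: $3,10,21,36,\dots$; type $D$: $6,15,28,45,\dots$; exceptional: $14,52,78,133,248$) shows that none has dimension $22$, so no such $U$ exists. The main obstacle is this endgame, not the homotopy bookkeeping: the constraints $\pi_1(U)=\Z_2$, $\pi_3(U)=\Z$, and (for $n\geq5$) $\pi_5(U)=0$ do not by themselves exclude $U$, and one must combine them with the arithmetic observation that $\dim U$ lies strictly between $\dim\Spin(2n-1)$ and $\dim\Spin(2n)$, while separately handling the low-rank exceptional isomorphisms $\Spin(5)\cong\syp(2)$, $\Spin(6)\cong\SU(4)$ and the leftover value $n=4$.
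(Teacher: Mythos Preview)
Your proof is correct and follows essentially the same route as the paper: use the long exact homotopy sequence of $U\hookrightarrow\SO(2n)\to\sph^{2n-2}$ to get $\pi_1(U)=\Z_2$ and $\pi_3(U)=\Z$, conclude $U$ is simple of dimension $2n^2-3n+2$, for $n\geq5$ force $\pi_5(U)=0$ to reduce to the orthogonal (or exceptional) case, and then finish with the observation that $\dim\SO(2n-1)<\dim U<\dim\SO(2n)$; the case $n=4$ is handled separately since no simple Lie group has dimension $22$. The only notable difference is that you additionally invoke the constraint that $Z(\widetilde U)$ must contain an element of order two, which lets you discard $G_2$, $F_4$, $E_6$, $E_8$ and $\SU(\text{odd})$ a priori rather than by the dimension check the paper performs; this is a tidy refinement but not a different argument.
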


\begin{proof}
Suppose there is some Lie group $U$ acting freely on $\SO(2n)$, $n \geq 4$, such that $\sph^{2n-2} = \SO(2n)/ U$.  Then we have a fibration $U \lra \SO(2n) \lra \sph^{2n-2}$.  The long exact sequence of homotopy groups for this fibration yields $\pi_1(U) = \Z_2$ and $\pi_3(U) = \Z$.  Therefore, $U$ must be a simple Lie group of dimension $(2n-1)(n-1) +1$.

Consider first the case $n>4$. Then from the long exact sequence in homotopy and the stable homotopy groups of Lie groups we see that
$$
\cdots \underset{\underset{0}{\|}}{\pi_6(\sph^{2n-2})} \rightarrow \pi_5(U) \rightarrow \underset{\underset{0}{\|}}{\pi_5(\SO(2n))} \rightarrow \pi_5(\sph^{2n-2}) \rightarrow \cdots
$$
which forces $\pi_5(U) = 0$. Since $\dim(U) = (2n-1)(n-1)+1 \geq 37$, we are in the stable range and may therefore conclude that either $U \cong \SO(m)$ or $U$ is an exceptional simple group. A quick check reveals that $\dim(U)$ is never equal to the dimension of any exceptional group. On the other hand, we see evidently that $\dim(\SO(2n-1)) = (2n-1)(n-1) < \dim(U) < \dim(\SO(2n)) = (2n-1)n$.

When $n=4$, the dimension of $U$ is 22 and there is no simple Lie group of that dimension.
Hence there are no Lie groups $U$ for which $\SO(2n)/ U = \sph^{2n-2}$ for each $n \geq 4$.
\end{proof}


\section{$\SU(2n)  \lra \sph^{4n-3}$ and $\SU(2n)\lra \cp^{2n-2}$, $n \geq 3$}

\begin{thm}
\label{oddsphmet}
For each $n \geq 3$, there are Riemannian submersions
$$
(\SU(2n), \met_0) \lra \sph^{4n-3},\qquad (\SU(2n), \met_0) \lra \cp^{2n-2}
$$
Moreover, there are no groups $U,U' \subseteq {\rm Diff}(\SU(2n))$ so that $\SU(2n)/U = \sph^{4n-3}$ and $\SU(2n)/U' = \cp^{2n-2}$.
\end{thm}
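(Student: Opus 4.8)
The plan is to produce the two Riemannian submersions via the construction of Sections~\ref{basic}--\ref{hommet}, and then to obstruct group quotients using the long exact homotopy sequence together with the homotopy groups $\pi_5$.

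First I would start from Oni\v s\v cik's identity $\SU(2n)/\syp(n) = \SU(2n-1)/\syp(n-1)$, so that $G = \SU(2n)$, $K_1 = \syp(n)$, $K_2 = \SU(2n-1)$, $H = \syp(n-1)$. For the sphere I would take $L = \SU(2n-2)$, giving the homogeneous fibration
$$
\SU(2n-2)/\syp(n-1) \lra \SU(2n-1)/\syp(n-1) \stackrel{\pi_2}{\lra} \SU(2n-1)/\SU(2n-2) = \sph^{4n-3},
$$
and for complex projective space I would take $L = \U(2n-2)$, giving
$$
\U(2n-2)/\syp(n-1) \lra \SU(2n-1)/\syp(n-1) \stackrel{\pi_2}{\lra} \SU(2n-1)/\U(2n-2) = \cp^{2n-2}.
$$
In each case I would check conditions (\textit{i}), (\textit{ii}), (\textit{iii}) from Section~\ref{hommet} and invoke the Theorem there. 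Here $\mf{m}_1$ is a complement of $\mf{sp}(n-1)$ in $\mf{su}(2n-2)$ (respectively, this together with the one-dimensional centre of $\mf{u}(2n-2)$); since $\SU(2n-2)/\syp(n-1)$ is an irreducible symmetric space, $\mf{m}_1$ is $\Ad(\syp(n-1))$-irreducible of dimension $(2n-1)(n-2)$ in the first case, and the sum of such a summand with a trivial one-dimensional summand in the second. The base tangent space $\mf{m}_2$ is the standard module $\C^{2n-2}$ (real dimension $4n-4$) for $\cp^{2n-2}$, and $\C^{2n-2}\oplus\R$ for $\sph^{4n-3}$; restricting the $\SU(2n-2)$- or $\U(2n-2)$-action to $\syp(n-1)$ turns $\C^{2n-2}$ into the defining module $\H^{n-1}$, which remains irreducible, so $\mf{m}_2$ has the same irreducible splitting under $\Ad(H)$ and $\Ad(L)$, which is (\textit{i}); condition (\textit{ii}) holds because for $n\geq 3$ the numbers $(2n-1)(n-2)$, $4n-4$ and $1$ are pairwise distinct, so Lemma~\ref{perpreps} applies; and (\textit{iii}) is vacuous since the summands of $\mf{m}_2$ are pairwise inequivalent. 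The Theorem of Section~\ref{hommet} then yields the two Riemannian submersions from $(\SU(2n), \met_0)$.

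For the second assertion, suppose $U \lra \SU(2n) \lra \sph^{4n-3}$ were a principal bundle (a free quotient $\SU(2n)/U$ realizing $\sph^{4n-3}$ forces $U$ to be a compact Lie group). Since $4n-3 \geq 9$, the exact homotopy sequence gives $\pi_1(U) = 0$, $\pi_3(U) \cong \pi_3(\SU(2n)) = \Z$ and $\pi_5(U) \cong \pi_5(\SU(2n)) = \Z$, so $U$ is simple and simply connected, with $\dim U = \dim\SU(2n) - (4n-3) = (2n-1)^2 + 1$. For $n = 3$ this equals $26$, which is not the dimension of any simple compact Lie group. For $n \geq 4$ we have $\dim U \geq 50$, so $U$ is in the stable range; since $\pi_5(\Spin(m)) = 0$, $\pi_5(\syp(m)) = \Z_2$, and $\pi_5$ vanishes on the (large) exceptional groups, the relation $\pi_5(U) = \Z$ forces $U \cong \SU(m)$, whence $m^2 - 1 = (2n-1)^2 + 1$, which is impossible because $(2n-1)^2 + 1$ lies strictly between $\dim\SU(2n-1) = (2n-1)^2 - 1$ and $\dim\SU(2n) = (2n)^2 - 1$. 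The case of $\cp^{2n-2}$ runs the same way, with one change: $\pi_2(\cp^{2n-2}) = \Z$ now gives $\pi_1(U') \cong \Z$, so $U'$ is, up to a finite cover, $\sph^1 \times V$ with $V$ simple and simply connected; from $\pi_3(U') = \pi_5(U') = \Z$ one gets $\pi_3(V) = \pi_5(V) = \Z$ while $\dim V = \dim U' - 1 = (2n-1)^2 + 1$, so $V$ satisfies exactly the same constraints and the same contradiction results.

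The hard part will be the last step. A dimension count alone does not suffice: the equation $(2n-1)^2 + 1 = m(2m+1)$ does have integer solutions --- for instance $n = 53$, $m = 74$, where $\dim\syp(74) = \dim\SO(149) = (2n-1)^2 + 1$ --- so there really are spurious candidate groups of exactly the right dimension, and it is precisely the computation $\pi_5(U) = \Z$ (and $\pi_5(V) = \Z$) that eliminates them. Hence the argument rests on extracting $\pi_5 = \Z$ from the exact sequence, which is why the hypothesis $n \geq 3$ (equivalently $4n - 3 \geq 7$) is exactly what is needed; a secondary point to verify is that, unlike the $\SO(8)$ triality phenomenon that complicates the $\SO(2n)$ family at $n = 4$, no small-$n$ coincidence disturbs the isotropy decompositions underlying conditions (\textit{i})--(\textit{iii}) here, the case $n = 3$ being entirely typical.
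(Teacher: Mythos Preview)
Your proposal is correct and follows precisely the scheme the paper indicates (the paper itself omits the proof, saying only that the arguments are ``essentially identical'' to the $\SO(2n)\to\sph^{2n-2}$ case and recording the isotropy splitting $\mf{su}(2n-1)=\mf{sp}(n-1)\oplus\mf{p}_1\oplus\mf{p}_2\oplus\mf{p}_3$ with $\dim\mf{p}_2=1$, $\dim\mf{p}_3=4(n-1)$). Your explicit verification of conditions (\textit{i})--(\textit{iii}), the homotopy computation giving $\pi_5(U)=\Z$, and in particular your observation that the Pell-type coincidence $(2n-1)^2+1=\dim\syp(74)=\dim\SO(149)$ at $n=53$ makes the $\pi_5$ input genuinely necessary, are all correct and go a bit beyond what the paper spells out.
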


The arguments in this case are essentially identical to the case of $\SO(2n) \lra \sph^{2n-2}$ so we omit them. The only comment that may be of some independent interest is the choice of constants for the homogeneous metric on $\SU(2n-1)/\syp(n-1)$ to be isometric to the normal homogeneous metric on $\SU(2n)/\syp(n)$. The isotropy representation of $\syp(n-1)\subseteq \SU(2n-1)$ splits into three irreducible summands, $\mathfrak{su}(2n-1) = \mathfrak{sp}(n-1) \oplus \mathfrak{p}_1 \oplus \mathfrak{p}_2 \oplus \mathfrak{p}_3$, where $\mathfrak{sp}(n-1) \oplus \mathfrak{p}_1 = \mathfrak{su}(2n-2) = \mathfrak{l}$, $\dim(\mf{p}_2) = 1$ and $\dim(\mf{p}_3) = 4(n-1)$. All homogeneous metrics on $\SU(2n-1)/\syp(n-1)$ are given by
$$
\langle\, ,\, \rangle = \lambda_1 Q\mid_{\mathfrak{p}_1} \perp \lambda_2 Q\mid_{\mathfrak{p}_2} \perp \lambda_3 Q\mid_{\mathfrak{p}_3},
$$
where $Q(X,Y) = -\frac{1}{2} {\rm tr}(XY)$ is a bi-invariant metric. To be isometric to the normal homogeneous space $\SU(2n)/\syp(n)$, it follows from \cite{Ke} that the appropriate choices are: $\lambda_2 = \frac{n}{2n-1}\lambda_1, \lambda_3 = \frac{1}{2} \lambda_1$.

\section{$\SO(4n)  \lra V_{3}(\R^{4n-1})$, $n \geq 3$}

\begin{thm}
\label{stiefelmet}
For each $n \geq 3$, there is a Riemannian submersion
$$
(\SO(4n), \met_0) \lra V_{3}(\R^{4n-1}),
$$
where $V_{3}(\R^{4n-1})$ is the Stiefel manifold $\SO(4n-1)/ \SO(4n-4)$.
\end{thm}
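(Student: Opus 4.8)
The plan is to obtain $\pi$ as a composition $\pi=\pi_{2}\comp\pi_{1}$ from the Oni\v s\v cik factorisation $\mf{so}(4n)=\mf{sp}(n)+\mf{so}(4n-1)$ (which comes from the transitive action of $\syp(n)$ on $\sph^{4n-1}$), giving the homogeneous-space identity $\SO(4n)/\syp(n)=\SO(4n-1)/\syp(n-1)$. I use this $\syp(n)$-version of the candidate rather than the $\syp(n)\syp(1)$-version listed in Table~\ref{table1}, because it is precisely here that the isotropy group $H=\syp(n-1)$ lies in a \emph{standard} $\SO(4n-4)\subseteq\SO(4n-1)$; this is forced, since only the standard embedding of $\SO(4n-4)$ in $\SO(4n-1)$ has quotient $V_{3}(\R^{4n-1})$. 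Taking $H=\syp(n-1)\subseteq L=\SO(4n-4)\subseteq K_{2}=\SO(4n-1)$ gives the homogeneous fibration
$$
\SO(4n-4)/\syp(n-1)\ \lra\ \SO(4n-1)/\syp(n-1)\ \overset{\pi_{2}}{\lra}\ \SO(4n-1)/\SO(4n-4)=V_{3}(\R^{4n-1}).
$$
With $\met_{0}$ on $\SO(4n)$ the submersion $\pi_{1}\colon\SO(4n)\to\SO(4n)/\syp(n)$ is Riemannian for the normal homogeneous metric; transporting that metric across the identity above puts a homogeneous metric $g$ on $\SO(4n-1)/\syp(n-1)$, and it remains to show $\pi_{2}$ is a Riemannian submersion for $g$, so that $\pi$ works (it has connected fibre, an extension of $\syp(n)$ by $\SO(4n-4)/\syp(n-1)$).

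The heart of the proof is the isotropy analysis. Write $\mf{so}(4n-1)=\mf{sp}(n-1)\oplus\mf{m}_{1}\oplus\mf{m}_{2}$ with $\mf{sp}(n-1)\oplus\mf{m}_{1}=\mf{so}(4n-4)$ and $\mf{m}_{2}$ the tangent space of $V_{3}(\R^{4n-1})$. As an $\SO(4n-4)$-module, $\mf{m}_{2}=3\,\R^{4n-4}\oplus\mf{so}(3)$; since $\R^{4n-4}=\H^{n-1}$ is irreducible over $\R$ as an $\syp(n-1)$-module, this is also its $\Ad(\syp(n-1))$-irreducible decomposition. The fibre module $\mf{m}_{1}=\mf{so}(4n-4)/\mf{sp}(n-1)$ splits under $\Ad(\syp(n-1))$ as $3\,\mf{a}\oplus\mf{sp}(1)'$, where $\mf{a}$ is a certain non-trivial irreducible module --- inequivalent, for all $n\geq3$ (by a dimension count), to both $\H^{n-1}$ and the trivial module --- and $\mf{sp}(1)'$ is the $3$-dimensional trivial summand carried by the $\syp(1)$ commuting with $\syp(n-1)$ in $\SO(4n-4)$. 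By Schur's Lemma $g$ is block-diagonal for the isotypic splitting $3\mf{a}\perp 3\,\H^{n-1}\perp(\mf{sp}(1)'\oplus\mf{so}(3))$; hence the $\pi_{2}$-horizontal space is $\mf{m}_{1}^{\perp_{g}}=3\,\H^{n-1}\oplus\mf{c}$, with $\mf{c}$ the $g$-orthocomplement of $\mf{sp}(1)'$ in the $6$-dimensional trivial part, and $d\pi_{2}$ carries it isometrically onto $\mf{m}_{2}=3\,\H^{n-1}\oplus\mf{so}(3)$. The induced metric $\bar g$ on $V_{3}(\R^{4n-1})$ splits orthogonally as $(g|_{3\,\H^{n-1}})\perp(\text{an inner product on }\mf{so}(3))$; since $\SO(4n-4)$ fixes $\mf{so}(3)$ pointwise, $\pi_{2}$ is a Riemannian submersion exactly when $g|_{3\,\H^{n-1}}$ is $\Ad(\SO(4n-4))$-invariant, i.e.\ when its Gram matrix on the three copies of $\H^{n-1}$ is a \emph{real} symmetric matrix rather than a genuinely quaternionic one.

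This last point is the main obstacle, and it is exactly where the situation differs from the sphere cases: the isotropy representation is not multiplicity-free, so hypotheses (\textit{ii}) and (\textit{iii}) of the basic construction theorem fail (the trivial summands of $\mf{m}_{1}$ and $\mf{m}_{2}$ coincide, and $\H^{n-1}$ is orthogonal as an $\SO(4n-4)$-module but symplectic as an $\syp(n-1)$-module), so a generic $\Ad(\syp(n-1))$-invariant metric does not descend. To settle it I would, following the computations of \cite{Ke}, pin down the constants defining the homogeneous metric on $\SO(4n-1)/\syp(n-1)$ that is isometric to the normal metric on $\SO(4n)/\syp(n)$, and check that the resulting $g|_{3\,\H^{n-1}}$ is indeed a common multiple of the standard inner product on the three copies of $\H^{n-1}$ --- the expected outcome, since the normal metric is the restriction of a bi-invariant form and is therefore $\Ad(\syp(1))$-invariant for the $\syp(1)$ commuting with $\syp(n)$ in $\SO(4n)$, and that $\syp(1)$ acts by its adjoint representation on the $3$-dimensional multiplicity space of $\H^{n-1}$, an irreducible module on which all invariant inner products agree up to scale. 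With this in hand, $\pi=\pi_{2}\comp\pi_{1}$ is the required Riemannian submersion.
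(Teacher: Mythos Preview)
Your proposal has a genuine gap: the argument stops precisely at the point where the real work begins. You correctly identify that with $H=\syp(n-1)$ conditions (\textit{ii}) and (\textit{iii}) of the general criterion fail --- the trivial summands in $\mf{m}_1$ and $\mf{m}_2$ are equivalent, and $\H^{n-1}$ is symplectic over $\syp(n-1)$ but orthogonal over $\SO(4n-4)$ --- but you then only \emph{announce} that you would ``pin down the constants \dots\ and check'' that the specific Gram matrix on $3\,\H^{n-1}$ is real. The heuristic you give (the extra $\syp(1)$ acts irreducibly on the multiplicity space) is in the right spirit, but it is not a proof: you have not exhibited that $\syp(1)$ acting on $\SO(4n-1)/\syp(n-1)$, nor verified how it acts on the isotypic blocks, nor dealt with the six-dimensional trivial part.

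The paper avoids this difficulty altogether by working with $K_1=\syp(n)\syp(1)$ and $H=\syp(n-1)\syp(1)$ rather than your $\syp(n)$ and $\syp(n-1)$. With the extra $\syp(1)$ in the isotropy two things happen simultaneously: the fibre tangent $\mf{p}_1=\mf{so}(4n-4)/(\mf{sp}(n-1)\oplus\mf{sp}(1))$ becomes a single irreducible $\Ad(\syp(n-1)\syp(1))$-module of dimension $3(2n-1)(n-2)$ (so no trivial summands to interfere with $\mf{m}_2$), and each $\H^{n-1}$, being the tensor product of the standard symplectic $\syp(n-1)$- and $\syp(1)$-representations, is of \emph{orthogonal} type over $\syp(n-1)\syp(1)$. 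Thus conditions (\textit{i}), (\textit{ii}), (\textit{iii}) all hold and the general theorem applies with no computation. In effect, the paper builds your ``extra $\syp(1)$ symmetry'' into the isotropy group from the start, which turns your unfinished verification into a one-line appeal to Schur's lemma. Your observation about the embedding of $\syp(n-1)\syp(1)$ in $\SO(4n-1)$ is worth keeping in mind, but it does not rescue the $\syp(n-1)$-route from needing the missing computation.
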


\begin{proof}
Consider the Riemannian submersion $\pi_1 : \SO(4n) \lra \SO(4n)/\syp(n)\syp(1)$, where $\SO(4n)$ is equipped with a bi-invariant metric.  From Oni\v s\v cik's classification we know that $\SO(4n)/\syp(n)\syp(1) = \SO(4n-1)/\syp(n-1)\syp(1)$.
Now $\syp(n-1)\syp(1) \subseteq \SO(4n-4) \subseteq \SO(4n-1)$ and so we have a fibration
$$
\xymatrix{
\SO(4n-4)/\syp(n-1)\syp(1) \ar[r] & \SO(4n-1)/\syp(n-1)\syp(1) \ar[d]^{\pi_2} & \\
 &  \SO(4n-1)/\SO(4n-4) & \hspace{-1.3cm} = V_{3}(\R^{4n-1})
}
$$
The tangent space to the fiber may be indentified with $\mf{p}_1$, an $\Ad(\syp(n-1)\syp(1))$-invariant complement of $\mf{so}(4n-4) \subseteq \mf{so}(4n-1)$.  $\mf{p}_1$ is $3(2n-1)(n-2)$-dimensional and in \cite[1984]{Wo} it is shown that it is $\Ad(\syp(n-1)\syp(1))$-irreducible.

On the other hand, we may use the chain of subgroups
$$
\SO(4n-4) \subseteq \SO(4n-3) \subseteq \SO(4n-2) \subseteq \SO(4n-1)
$$
to identify the tangent space to the base with
\begin{equation}
\label{mdecomp}
\mf{m} := \mf{p}_2 \oplus \mf{p}_3 \oplus \mf{p}_4 \oplus \mf{p}_5 \oplus \mf{p}_6 \oplus \mf{p}_7 \subseteq \mf{so}(4n-1)
\end{equation}
where $\mf{p}_i \cong \R^{4n-4}$, $i = 2,3,4$, and $\mf{p}_i \cong \R$, $i = 5,6,7$, are $\Ad(\SO(4n-4))$-irreducible.  The isotropy representation of $\SO(4n-4)$ on $\mf{m}$ decomposes into standard $\SO(4n-4)$ actions on $\mf{p}_i \cong \R^{4n-4}$, $i = 2,3,4$, and trivial representations on $\mf{p}_i \cong \R$, $i = 5,6,7$.  This is easily seen by considering the $\Ad(\SO(4n-4))$ action on
\begin{equation}
\label{matdecomp}
\mf{so}(4n-1) =
\left(\begin{array}{ccc|c|c|c}
    & & & \vdots & \vdots & \vdots \\
    & \mf{so}(4n-4) & & \mf{p}_2 & \mf{p}_3 & \mf{p}_4 \\
    & & & \vdots & \vdots & \vdots \\
\hline
    & \cdots & & 0 & \mf{p}_5 & \mf{p}_6 \\
\hline
    & \cdots & & \cdot & 0 & \mf{p}_7 \\
\hline
    & \cdots & & \cdot & \cdot & 0
\end{array}\right)
\end{equation}
where we recall that elements of $\mf{so}(k)$ are skew-symmetric.  It is clear that the representations $\mf{p}_2$, $\mf{p}_3$ and $\mf{p}_4$ are equivalent \emph{real} representations, as are $\mf{p}_5$, $\mf{p}_6$ and $\mf{p}_7$.  Moreover, by Schur's Lemma, $(\mf{p}_2 \oplus \mf{p}_3 \oplus \mf{p}_4) \perp (\mf{p}_5 \oplus \mf{p}_6 \oplus \mf{p}_7)$.  In each case $\mf{p}_i \otimes \C$ is irreducible.  Hence the space of intertwining operators is one-dimensional when $\mf{p}_i$ and $\mf{p}_j$ are equivalent, from which it follows that the space of all $\Ad(\SO(4n-4))$-invariant inner products on $\mf{m}$ is given by two real, symmetric, $3 \x 3$ matrices, i.e., $12$ real parameters.

We now consider the isotropy representation of $\syp(n-1) \syp(1)$ and check whether the type and irreducible decomposition of the representation restricted from $\SO(4n-4)$ remains the same. We remark that this is crucial otherwise the number of parameters that determine the metric may be different and hence, likely, not yield a Riemannian submersion.
The restriction of the $\Ad(\SO(4n-4))$ action on $\mf{m}$ to $\syp(n-1)\syp(1) \subseteq \SO(4n-4)$ yields the same irreducible decomposition as in (\ref{mdecomp}).  An easy way to see this is by considering the subgroup $\syp(n-1) \subseteq \syp(n-1)\syp(1)$.  It's clear that this gives the same decomposition as in (\ref{mdecomp}), where the $\Ad(\syp(n-1))$ action on $\mf{p}_i$, $i = 2,3,4$, is the standard irreducible representation of $\syp(n-1)$ on $\R^{4n-4} \cong \H^{n-1}$.  Thus $\syp(n-1)\syp(1)$ must also decompose $\mf{m}$ as in (\ref{mdecomp}).

In \cite[1984]{Wo} it is shown that the embedding of $\syp(n-1)\syp(1)$ into $\SO(4n-4)$, namely the restriction of the standard (complex) $\SO(4n-4)$ representation to $\syp(n-1)\syp(1)$, is given by the tensor product of the standard $\syp(n-1)$ and $\syp(1)$ (complex) representations.  Since each of these is a sympletic representation, it follows from \cite[p. 264, Exer. 3]{BtD} that their tensor product is an orthogonal representation, i.e. $\mf{p}_i \otimes \C$ is $\syp(n-1)\syp(1)$-irreducible for $i=2, 3, 4$.  A similar argument works for $\mf{p}_i \otimes \C \cong \C$, $i = 5,6,7$.  Hence the space of intertwining operators is one-dimensional whenever $\mf{p}_i$ and $\mf{p}_j$ are equivalent.

Thus we may write
$$
\begin{aligned}
\mf{so}(4n-1) &= \mf{so}(4n-4) \oplus \mf{m} \\
              &= (\mf{sp}(n-1)\mf{sp}(1) \oplus \mf{p}_1) \oplus \mf{m}.
\end{aligned}
$$
Since $\dim (\mf{p}_1) \neq \dim (\mf{p}_i)$ for all $i = 2, \dots, 7$, Schur's Lemma ensures that $\mf{p}_1 \perp \mf{m}$ for every $\Ad(\syp(n-1)\syp(1))$-invariant inner product on $\mf{p}_1 \oplus \mf{m}$.  Therefore it follows that the space of all $\Ad(\syp(n-1)\syp(1))$-invariant inner products on $\mf{p}_1 \oplus \mf{m}$ is given by one real parameter together with two real, symmetric, $3 \x 3$ matrices, i.e., $13$ real parameters.

In particular, for \emph{any} homogeneous metric on $\SO(4n-1)/\syp(n-1)\syp(1)$, the map $\pi_2: \SO(4n-1)/\syp(n-1)\syp(1) \lra \SO(4n-1)/\SO(4n-4)$ is a Riemannian submersion, where the metric on $\SO(4n-1)/\SO(4n-4)$ is given by restricting the $\Ad(\syp(n-1)\syp(1))$-invariant inner product on $\mf{p}_1 \oplus \mf{m}$ to $\mf{m}$.

Hence, if we choose the $13$ real parameters describing the homogeneous metric such that the metric on $\SO(4n-1)/\syp(n-1)\syp(1)$ is isometric to the normal homogeneous metric on $\SO(4n)/\syp(n)\syp(1)$, then the composition
$$
\pi = \pi_2 \comp \pi_1: SO(4n) \lra \SO(4n-1)/\SO(4n-4) = V_{3}(\R^{4n-1})
$$
is a Riemannian submersion as desired.
\end{proof}

\begin{thm}
\label{stiefeltop}
For each $n\geq 3$, there is no Lie group $U$ acting freely on $\SO(4n)$ such that $\SO(4n)/ U = V_3(\R^{4n-1})$.
\end{thm}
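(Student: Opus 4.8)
The plan is to reproduce, for this base, the argument used for Theorem~\ref{evensphtop}: assume such a $U$ exists, extract from the associated fibration enough homotopy-theoretic and dimensional data to confine $U$ to a short list of possibilities, and then rule out every one of them.

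First I would note that a free action of a Lie group $U$ on the compact manifold $\SO(4n)$ with Hausdorff (indeed smooth) quotient forces $U$ to be compact and exhibits $\SO(4n)\lra V_3(\R^{4n-1})$ as a principal $U$-bundle, hence a fibration $U\lra\SO(4n)\lra V_3(\R^{4n-1})$. The Stiefel manifold $V_3(\R^{4n-1})=\SO(4n-1)/\SO(4n-4)$ is $(4n-5)$-connected, and $4n-5\geq 7$ for $n\geq 3$; in particular $\pi_i(V_3(\R^{4n-1}))=0$ for $i\leq 6$. Feeding this into the long exact homotopy sequence of the fibration yields $\pi_1(U)\cong\pi_1(\SO(4n))=\Z_2$, $\pi_3(U)\cong\pi_3(\SO(4n))=\Z$ and $\pi_5(U)\cong\pi_5(\SO(4n))=0$ (the last because $4n\geq 7$). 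Since $\pi_1(U)$ is finite, $U$ is semisimple, and since $\pi_3(U)=\Z$ it has exactly one simple factor; thus $U$ is a simple, compact Lie group, of dimension
$$
\dim U=\dim\SO(4n)-\dim V_3(\R^{4n-1})=2n(4n-1)-3(4n-3)=8n^2-14n+9.
$$

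Next I would combine the vanishing $\pi_5(U)=0$ with this dimension. For $n\geq 3$ we have $\dim U=8n^2-14n+9\geq 39$, so $U$ lies well inside the stable range; since $\pi_5(\SU(k))=\Z$ for $k\geq 3$ and $\pi_5(\syp(k))=\Z_2$ for $k\geq 1$, the condition $\pi_5(U)=0$ excludes every group locally isomorphic to an $\SU$- or $\syp$-factor, so $U$ must be locally isomorphic to $\SO(m)$ for some $m$ (which also covers $\Spin(m)$ and, when applicable, its half-spin quotient, all of the same dimension) or to an exceptional simple group. The $\SO(m)$ case is eliminated by a dimension sandwich: $\dim\SO(4n-3)=(4n-3)(2n-2)=8n^2-14n+6$ and $\dim\SO(4n-2)=(4n-2)(2n-1)=8n^2-10n+3$, so $\dim\SO(4n-3)<\dim U<\dim\SO(4n-2)$ for every $n\geq 2$, whence $\dim U$ is never the dimension of any $\SO(m)$. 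The exceptional case is eliminated by a finite check that $8n^2-14n+9$ equals none of $14,52,78,133,248$ — for instance it takes the values $39,81,139,213$ at $n=3,4,5,6$ and is increasing thereafter, or one simply notes that the relevant discriminants are not perfect squares. This contradiction proves the theorem.

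I do not anticipate a genuine obstacle: the proof is essentially a transcription of the one for Theorem~\ref{evensphtop}, and the only points requiring care are (a) pinning down the exact connectivity of $V_3(\R^{4n-1})$ so that the low-degree homotopy of $\SO(4n)$ transfers verbatim to $U$, and (b) the elementary dimension bookkeeping, in particular checking that the sandwich $\dim\SO(4n-3)<\dim U<\dim\SO(4n-2)$ is strict and that no exceptional dimension slips through.
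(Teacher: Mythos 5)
Your proposal is correct and follows essentially the same route as the paper's own proof: use the $(4n-5)$-connectivity of $V_3(\R^{4n-1})$ and the long exact homotopy sequence to get $\pi_1(U)=\Z_2$, $\pi_3(U)=\Z$, $\pi_5(U)=0$, conclude $U$ is simple of dimension $8n^2-14n+9\geq 39$ and hence (in the stable range) of orthogonal or exceptional type, and then eliminate both by the dimension sandwich $\dim\SO(4n-3)<\dim U<\dim\SO(4n-2)$ and the finite check against $14,52,78,133,248$. All the numerical bookkeeping checks out.
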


\begin{proof}
Suppose there is some Lie group $U$ acting freely on $\SO(4n)$, $n \geq 3$, such that $V_{3}(\R^{4n-1}) = \SO(4n)/ U$.  Then we have a fibration $U \lra \SO(4n) \lra V_{3}(\R^{4n-1})$.  It is well-known that $V_{k}(\R^{m})$ is $(m-k-1)$-connected \cite[p. 382]{Ha}.  In particular, $\pi_j(V_{3}(\R^{4n-1})) = 0$ for all $j \leq 7$ since $n \geq 3$.  The long exact sequence of homotopy groups for our fibration now yields $\pi_1(U) = \Z_2$ and $\pi_3(U) = \Z$.  Therefore, $U$ must be a simple Lie group.

Since $V_3(\R^{4n-1})$ is at least 7-connected, we see from the long exact sequence in homotopy that $\pi_5(U) = \pi_5(\SO(4n)) = 0$. Since $\dim(U) = 8n^2 - 14n +9 \geq 39$, we are in the stable range and it follows that $U$ must be isomorphic to $\SO(m)$ for some $m$ or to an exceptional simple group. $\dim(U)$ is not equal to that of any exceptional group. On the other hand,
$$
\dim(\SO(4n-3)) = 8n^2 - 14n+6 < \underset{= \dim(U)}{8n^2 -14n +9} < 8n^2 - 10n+3 = \dim(\SO(4n-2))
$$

Hence there are no Lie groups $U$ for which $\SO(4n)/ U = V_3(\R^{4n-1})$ if $n \geq 3$.
\end{proof}

\begin{cor}
\label{stiefelcor}
For each $n \geq 3$, there is a Riemannian submersion
$$
(\SO(4n), \met_0) \lra M^{12n-10} := \SO(2) \backslash \SO(4n-1) /\SO(4n-4).
$$
Moreover, this Riemannian submersion is not the result of a free, isometric Lie group action on $\SO(4n)$.
\end{cor}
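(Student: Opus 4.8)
The plan is to obtain the submersion by post-composing the one from Theorem~\ref{stiefelmet} with a free circle quotient, and then to rule out a group action by the same homotopy-and-dimension argument as in Theorems~\ref{evensphtop} and~\ref{stiefeltop}. In effect this really is a corollary: the only genuinely new ingredients are the circle quotient and the low-dimensional homotopy of the new base.

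For existence, Theorem~\ref{stiefelmet} already provides a Riemannian submersion $\pi\colon(\SO(4n),\met_0)\lra V_3(\R^{4n-1})=\SO(4n-1)/\SO(4n-4)$, whose target carries an $\SO(4n-1)$-invariant homogeneous metric. As indicated in Section~\ref{basic}, the diagonal circle $\SO(2)\subseteq\SO(4n-1)$, $\theta\mapsto\diag(R_\theta,\dots,R_\theta,1)$ with $2n-1$ rotation blocks, acts on $V_3(\R^{4n-1})$ from the left. I would first observe that this action is isometric for every $\SO(4n-1)$-invariant metric, and free, since for $\theta\notin 2\pi\Z$ the displayed matrix has $1$ as an eigenvalue of multiplicity one, hence fixes no $3$-plane pointwise and so lies in no conjugate of $\SO(4n-4)$. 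Consequently $V_3(\R^{4n-1})\lra\SO(2)\backslash V_3(\R^{4n-1})=M^{12n-10}$ is a Riemannian submersion for the quotient metric, $M^{12n-10}$ is a smooth manifold, and the composite $\SO(4n)\to V_3(\R^{4n-1})\to M^{12n-10}$ is a Riemannian submersion with connected fibers (those of $\pi$ are connected and the new fiber is $\SO(2)$).

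For the non-existence part, suppose $U$ acts freely and isometrically on $\SO(4n)$ with $\SO(4n)/U=M:=M^{12n-10}$; then $U$ is compact and $U\to\SO(4n)\to M$ is a principal bundle. The key computation is the low-dimensional homotopy of $M$, obtained from the circle bundle $\SO(2)\to V_3(\R^{4n-1})\to M$: since $V_3(\R^{4n-1})$ is at least $7$-connected for $n\geq3$ (as recalled in the proof of Theorem~\ref{stiefeltop}), its long exact sequence gives $\pi_1(M)=0$, $\pi_2(M)\cong\Z$, and $\pi_j(M)=0$ for $3\leq j\leq7$. Inserting this into the long exact sequence of $U\to\SO(4n)\to M$, and using $\pi_1(\SO(4n))=\Z_2$, $\pi_2(\SO(4n))=0$, $\pi_3(\SO(4n))=\Z$, $\pi_5(\SO(4n))=0$, one gets a short exact sequence $0\to\Z\to\pi_1(U)\to\Z_2\to0$ (so $\pi_1(U)$ has free rank $1$), an isomorphism $\pi_3(U)\cong\Z$, and $\pi_5(U)=0$. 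Thus, up to finite cover, $U\cong T^1\times U_1$ with $U_1$ compact, connected, simply connected and simple, $\pi_5(U_1)=0$, and $\dim U_1=\dim\SO(4n)-\dim M-1=(8n^2-2n)-(12n-10)-1=8n^2-14n+9$.

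Since $8n^2-14n+9$ is exactly the dimension handled in the proof of Theorem~\ref{stiefeltop}, the argument there applies verbatim: for $n\geq3$ this is $\geq39$, so we are in the stable range and, as $\pi_5(U_1)=0$, the isomorphisms recalled in Section~\ref{basic} force $U_1\cong\Spin(m)$ or $U_1$ exceptional; but $8n^2-14n+9$ lies strictly between $\dim\SO(4n-3)=8n^2-14n+6$ and $\dim\SO(4n-2)=8n^2-10n+3$, excluding all $\Spin(m)$, and it is odd whereas the only odd exceptional dimension, $133=\dim E_7$, cannot equal $8n^2-14n+9$ for integral $n$. This contradiction proves that no such $U$ exists. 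The only step that is not pure bookkeeping is the circle-bundle homotopy computation for $M^{12n-10}$ together with the freeness check for the diagonal $\SO(2)$-action on $V_3(\R^{4n-1})$; neither is hard, and everything downstream is word-for-word the argument already made for the Stiefel manifold.
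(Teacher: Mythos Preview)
Your proof is correct and follows essentially the same route as the paper: obtain the submersion by composing Theorem~\ref{stiefelmet} with the free diagonal $\SO(2)$-quotient of $V_3(\R^{4n-1})$, then compute $\pi_j(M)$ from the circle bundle over the $7$-connected Stiefel manifold, deduce that any hypothetical $U'$ is covered by $\sph^1\times U_1$ with $U_1$ simple of dimension $8n^2-14n+9$ and $\pi_5(U_1)=0$, and finish by invoking the dimension count of Theorem~\ref{stiefeltop}. Your argument is slightly more explicit in places (the eigenvalue check for freeness, the parity observation ruling out most exceptional groups), but there is no substantive difference in strategy.
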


\begin{proof}
Consider the circle subgroup $\SO(2) \subseteq \SO(4n-1)$ given by $\diag(A,\dots,A,1)$, $A \in \SO(2)$.  Then $\SO(2)$ acts freely on $V_3 (\R^{4n-1}) = \SO(4n-1) /\SO(4n-4)$ on the left since the two-sided action of $\SO(2) \x \SO(4n-4)$ on $\SO(4n-1)$ is free.  Now, since the metric on $V_3 (\R^{4n-1})$ described in Theorem \ref{stiefelmet} is homogeneous, this $\SO(2)$-action is by isometries.  Therefore $V_3 (\R^{4n-1}) \lra M^{12n-10}$ is a Riemannian submersion and we may compose it with $(\SO(4n), \met_0) \lra V_3 (\R^{4n-1})$ to yield the desired Riemannian submersion.

Consider the long exact sequence for homotopy associated to the fibration
$$
\sph^1 \lra V_3(\R^{4n-1}) \lra M.
$$
Since $\pi_j(V_{3}(\R^{4n-1})) = 0$ for all $j \leq 7$, it follows that $\pi_2(M) = \Z$ and $\pi_j(M) = 0$ for $ j = 1, 3, 4, 5, 6$.
Suppose that there is some Lie group $U'$ acting freely on $\SO(4n)$ such that $M = \SO(4n)/U'$.  The long exact homotopy sequence for the fibration $U' \lra \SO(4n) \lra M$ shows that $U'$ is diffeomorphic to either $\sph^1 \x U$ or $(\sph^1 \x U)/\Z_2$, where $U$ is a compact, connected, simply connected, simple Lie group.

From the long exact sequence it also follows that $\pi_5(U') = \pi_5(U) = \pi_5(\SO(4n)) = 0$. So we are now looking for $U$, a compact, simple group of dimension $8n^2 - 14n+9$ and isomorphic to $\SO(m)$ for some $m$ or to an exceptional simple group. From the proof of Theorem~\ref{stiefeltop} there is no such $U$ and hence, there can be no free $U'$-action on $\SO(4n)$ with quotient $M$.
\end{proof}

\section{$\SO(4n) \rightarrow \sph^{4n-2}$ and $\SO(4n) \rightarrow T^1 \sph^{4n-2}$}
\label{evensphere2}

These two examples yield topological submersions which are not group quotients. Indeed, one may adapt the proof of Theorem~\ref{stiefeltop} to show that neither $\sph^{4n-2}$ nor $T^1 \sph^{4n-2}$ are quotients of $\SO(4n)$ by a group action. However, our method of constructing a Riemannian submersion breaks down in this instance.

Consider our setup: we start with a bi-invariant metric on $\SO(4n)$ which yields a homogeneous metric $\met$ on $\SO(4n-1)/\syp(n-1)\syp(1)$ isometric to the normal homogeneous metric on $\SO(4n)/\syp(n)\syp(1)$.
From the previous section we already know the isotropy representation of $\syp(n-1)\syp(1)$:
$$
\mathfrak{so}(4n-1) = \mathfrak{sp}(n-1) \mathfrak{sp}(1) \oplus \mathfrak{p}_1 \oplus \underbrace{\mathfrak{p}_2 \oplus \mathfrak{p}_3 \oplus \mathfrak{p}_4 \oplus \mathfrak{p}_5 \oplus \mathfrak{p}_6 \oplus \mathfrak{p}_7}_{\mathfrak{m}}
$$
where $\mathfrak{p}_1$ is the complement of $\mathfrak{sp}(n-1) \mathfrak{sp}(1)$ in $\mathfrak{so}(4n-4)$ and $\mathfrak{m}$ decomposes into six irreducible pieces: three equivalent modules isomorphic to $\R^{4n-4}$ and three trivial (one dimensional) modules. The decomposition of $\mathfrak{m}$ is the same for $\SO(4n-4)$ as it is for $\syp(n-1) \syp(1)$.
Recall that in the Lie algebra $\mathfrak{so}(4n-1)$ this decomposition is given by $(\ref{matdecomp})$.

Consider now the quotient $\SO(4n-1)/\SO(4n-3) = T^1 \sph^{4n-2}$. The isotropy representation splits as $\mathfrak{so}(4n-1) = \mathfrak{so}(4n-3) \oplus \mathfrak{n}$ which decomposes as:
$$
\mathfrak{so}(4n-1) = \underbrace{(\mathfrak{sp}(n-1)\mathfrak{sp}(1) \oplus \mathfrak{p}_1 \oplus \mathfrak{p}_2)}_{\mathfrak{so}(4n-3)} \oplus \underbrace{(\mathfrak{q}_3 \oplus \mathfrak{q}_4 \oplus \mathfrak{p}_7)}_{\mathfrak{n}}
$$
where $\mathfrak{q}_3, \mathfrak{q}_4$ are equivalent, irreducible modules isomorphic to $\R^{4n-3}$ and $\mathfrak{p}_7$ is a trivial, one dimensional module. Note that $\mathfrak{q}_j$ splits further under the action of $\syp(n-1)\syp(1)$ as $\mathfrak{q}_3 = \mathfrak{p}_3 \oplus \mathfrak{p}_5$ and $\mathfrak{q}_4 = \mathfrak{p}_4 \oplus \mathfrak{p}_6$. Therefore, the two isotropy actions have different irreducible decompositions.

In order for the maps, $\pi_2: \SO(4n-1)/\syp(n-1)\syp(1) \rightarrow \SO(4n-1)/\SO(4n-3)$ and $\pi_2': \SO(4n-1)/\syp(n-1)\syp(1) \rightarrow \SO(4n-1)/\SO(4n-2)$ to be Riemannian submersions we need (see Section 3) that $\mathfrak{p}_2$ is perpendicular to $\mathfrak{p}_4$ with respect to $\met$. 
So we need to know the induced left invariant metric on $\mathfrak{p}_2 \oplus \cdots \oplus \mathfrak{p}_7$ inside $\mathfrak{so}(4n-1)$. This is done as follows: restrict the bi-invariant metric on $\mathfrak{so}(4n)$ which is given by $\langle X,Y \rangle_0 = -\frac{1}{2} {\rm tr}(XY)$, to $\mathfrak{so}(4n-1)$. The tangent space to $\SO(4n-1)/\syp(n-1)\syp(1)$ is isomorphic to $\mathfrak{p}_1 \oplus \cdots \oplus \mathfrak{p}_7$. On the other hand, we also have $\mathfrak{so}(4n) = \mathfrak{sp}(n) \oplus \mathfrak{sp}(1) \oplus \mf{r}$; let $\pi_{\mf{r}}: \mathfrak{so}(4n) \rightarrow \mf{r}$ denote the orthogonal projection. If $U,V$ are vectors in $\mathfrak{p}_1 \oplus \cdots \oplus \mathfrak{p}_7$, then the induced metric is given by, $\MMet{U}{V} = \MMet{\pi_{\mf{r}}(U)}{\pi_{\mf{r}}(V)}_0$.

Let $E_{ij} \in \mathfrak{so}(4n)$ denote the vector whose $ij$-th entry is 1 (and therefore its $ji$-th entry is necessarily $-1$). Then the $E_{ij}$ form an orthogonal basis for $\mathfrak{so}(4n)$. Consider now the vectors, $E_{1, 4n-3} \in \mathfrak{p}_2$, 
$E_{3, 4n-1} \in \mathfrak{p}_4$.  A simple calculation reveals that
$\MMet{E_{1,4n-3}}{E_{3,4n-1}} = \MMet{\pi_{\mf{r}}(E_{1,4n-3})}{\pi_{\mf{r}}(E_{3,4n-1})}_0 = -\frac{1}{4}$. This shows immediately that the subspaces are pairwise not orthogonal, as claimed, and hence the maps $\pi_2$ and $\pi_2'$
are \textbf{not} Riemannian submersions for the metric $\met$ on $\SO(4n-1)/\syp(n-1)\syp(1)$.

\begin{remark}
One can, in fact, show that there is no homogeneous metric on $\SO(4n-1)/\syp(n-1)\syp(1)$ whatsoever such that the maps $\pi_2$ (resp. $\pi_2'$) and $\pi = \pi_2 \circ \pi_1$ (resp. $\pi' = \pi_2' \circ \pi_1$) are both Riemannian submersions.
\end{remark}



\appendix

\section{Enlargements of transitive actions}

Table 4 is due to A.\ L.\ Oni\v s\v cik (\cite{On}) and classifies simple, compact Lie algebras $\mf{g}$ with sub-algebras $\mf{k}_1, \mf{k}_2$ such that $\mf{g} = \mf{k}_1 + \mf{k}_2$. We present the group versions here and identify the space whenever possible.

\begin{center}
\large
\begin{table}[!htp]
\begin{tabular}{|l|l|l|}\hline
\multicolumn{1}{|c|}{\multirow{2}{*}{$\boldsymbol{G/K_1}$}} & \multicolumn{1}{|c|}{\multirow{2}{*}{$\boldsymbol{K_2/H}$}} & \textbf{Homogeneous} \\
 & & \multicolumn{1}{|c|}{\textbf{space}} \\ \hline \hline
 \multicolumn{3}{|c|}{$\boldsymbol{G/K_1}$ \textbf{symmetric}} \\ \hline \hline
$\SO(4n)/\SO(4n-1)$ & $\syp(n)/\syp(n-1)$ & $\sph^{4n-1}$ \\ \hline
 $\SO(4n)/\SO(4n-1)$ & $\syp(n) \U(1)/\syp(n-1) \U(1)$ & $\sph^{4n-1}$ \\ \hline
 $\SO(4n)/\SO(4n-1)$ & $\syp(n) \syp(1)/\syp(n-1) \syp(1)$ & $\sph^{4n-1}$ \\ \hline
$\SO(2n)/\SO(2n-1)$ & $\U(n)/\U(n-1)$ & $\sph^{2n-1}$ \\ \hline
 $\SO(2n)/\SO(2n-1)$ & $\SU(n)/\SU(n-1)$ & $\sph^{2n-1}$ \\ \hline
$\SO(2n)/\U(n)$ & $\SO(2n-1)/\U(n-1)$ & \\ \hline 
$\SO(16)/\SO(15)$ & $\Spin(9)/\Spin(7)$ & $\sph^{15}$ \\ \hline
$\SO(8)/\SO(7)$ & $\Spin(7)/\G_2$ & $\sph^7$ \\ \hline
$\SO(8)/\Spin(7)$ & $\SO(7)/\G_2$ & $ \rp^7$ \\ \hline
$\SO(8)/\SO(3) \SO(5)$ & $\Spin(7)/\SO(4)$ & $G_3^+ (\R^8)$ \\ \hline
$ \SO(7)/\SO(6)$ & $\G_2/\SU(3)$ & $\sph^6$ \\ \hline
$\SO(7)/\G_2$ & $\SO(2) \SO(5)/\U(2)$ & $\rp^7$ \\ \hline
$\SO(7)/\SO(2) \SO(5)$ & $\G_2/\U(2)$ & $G_2^+(\R^7)$ \\ \hline
$\SU(2n)/\U(2n-1)$ & $\syp(n)/\syp(n-1) \U(1)$ & $\cp^{2n-2}$ \\ \hline
$\SU(2n)/\syp(n)$ & $\SU(2n-1)/\syp(n-1)$ & \\ \hline
\multicolumn{3}{|c|}{$\boldsymbol{G/K_1}$ \textbf{non-symmetric}} \\ \hline \hline
$\SO(4n)/\syp(n)$ & $\SO(4n-1)/\syp(n-1)$ & \\ \hline
$\SO(4n)/\syp(n) \U(1)$ & $\SO(4n-1)/\syp(n-1) \U(1)$ & \\ \hline
$\SO(4n)/\syp(n) \syp(1)$ & $\SO(4n-1)/\syp(n-1) \syp(1)$ & \\ \hline
$\SO(2n)/\SU(n)$ & $\SO(2n-1)/\SU(n-1)$ & \\ \hline
$\SO(16)/\Spin(9)$ & $\SO(15)/\Spin(7)$ & \\ \hline
$\SO(8)/\SO(6)$ & $\Spin(7)/\SU(3)$ & $V_2(\R^8)$ \\ \hline
$\SO(8)/\SO(5)$ & $\Spin(7)/\SU(2)$ & $V_3(\R^8)$ \\ \hline
$\SO(8)/\SO(2) \SO(5)$ & $\Spin(7)/\SO(2) \SU(2)$ & \\ \hline
$\SO(7)/\SO(5)$ & $\G_2/\SU(2)$ & $V_2(\R^7)$ \\ \hline

 \end{tabular} 
 \vspace{.2cm}
 \caption{Oni\v s\v cik's classification of $(G, K_1, K_2)$ with $\mf{g} = \mf{k}_1 + \mf{k}_2$, and $G$ simple.}\label{onishchik}
 \end{table}
 \end{center}
 \normalsize


\begin{thebibliography}{999}

\bibitem[Be]{Be} A.\ Besse, \emph{Einstein Manifolds}, Springer, 1987


\bibitem[Bre]{Bre} G.\ E.\ Bredon, \emph{Topology and Geometry}, Springer-Verlag, New York, 1993.

\bibitem[BtD]{BtD} T.\ Br\"ocker and T.\ tom Dieck, \emph{Representations of compact Lie groups}, Springer-Verlag, 2003.


\bibitem[Es1]{Es1} R.\ Escobales, \emph{Riemannian submersions with totally geodesic fibers}, J. Diff. Geom. {\bf 10} (1975), 253-276.

\bibitem[Es2]{Es2} R.\ Escobales, \emph{Riemannian submersions from complex projective space}, J. Diff. Geom. {\bf 13} (1978), 93-107.

\bibitem[GG1]{GG1} D.\ Gromoll and K.\ Grove, \emph{A generalization of Berger's rigidity theorem for positively curved manifolds}, Ann. Sci. \`Ecole Norm. Sup. {\bf 11} (1987), 227-239.

\bibitem[GG2]{GG2} D.\ Gromoll and K.\ Grove, \emph{The low-dimensional metric foliations of Euclidean spheres}, J. Diff. Geom. {\bf 28} (1988), 143-156.

\bibitem[GW]{GW} D.\ Gromoll and G.\ Walschap, \emph{The metric fibrations of Euclidean space}, J. Diff. Geom. {\bf 57} (2001), 233-238.

\bibitem[Gro]{gro} K.\ Grove, \emph{Geometry of and via symmetries}, in Conformal, Riemannian and Lagrangian geometry, The 2000 Barrett Lectures, University Lecture Series (AMS), vol.\ 27.

\bibitem[Ha]{Ha} A.\ Hatcher, \emph{Algebraic Topology}, Cambridge University Press, 2002.


\bibitem[Ke]{Ke} M.\ Kerr, \emph{Some new homogeneous Einstein metrics on symmetric spaces}, Trans. Amer. Math. Soc., {\bf 348}(1) (1996), 153-171.


\bibitem[On]{On} A.\ L.\ Oni\v s\v cik, \emph{Inclusion relations among transitive compact transformation groups}, Am. Math. Soc. Transl., {\bf 50} (1966), 5-58.

\bibitem[Ra]{Ra} A.\ Ranjan, \emph{Riemannian submersions of spheres with totally geodesic fibres}, Osaka J. Math. {\bf 22} (1985), 243-260.







\bibitem[Wi]{Wi} B.\ Wilking, \emph{Index parity of closed geodesics and rigidity of Hopf fibrations}, Invent. Math. {\bf 144} (2001), 281-295.

\bibitem[Wo]{Wo} J.\ A.\ Wolf, \emph{The geometry and structure of isotropy irreducible homogeneous spaces}, Acta Math. {\bf 166} (1968), 59-148; Correction, Acta Math. {\bf 152} (1984), 141-142.


\bibitem[Zi]{Zi} W.\ Ziller, \emph{Examples of Riemannian manifolds with non-negative sectional curvature}, Metric and Comparison Geometry, Surv. Diff. Geom. 11, ed. K.\ Grove and J.\ Cheeger, International Press, 2007



\end{thebibliography}
\end{document}